\newtheorem{theorem}{Theorem}[section]
\newtheorem{corollary}[theorem]{Corollary}
\newtheorem{lemma}[theorem]{Lemma}
\newtheorem{proposition}[theorem]{Proposition}
\theoremstyle{definition}
\newtheorem{definition}[theorem]{Definition}
\theoremstyle{remark}
\renewcommand{\theclaim}{\textup{\theclaim}}
\numberwithin{equation}{section}
\def\openone
\newbox\ipbox
\newcommand{\diracb}[1]{\left\langle #1\mathrel{\mathchoice

{\setbox\ipbox=\hbox{$\displaystyle \left\langle\mathstrut
#1\right.$}

\vrule height\ht\ipbox width0.25pt depth\dp\ipbox}

{\setbox\ipbox=\hbox{$\textstyle \left\langle\mathstrut
#1\right.$}

\vrule height\ht\ipbox width0.25pt depth\dp\ipbox}

{\setbox\ipbox=\hbox{$\scriptstyle \left\langle\mathstrut
#1\right.$}

\vrule height\ht\ipbox width0.25pt depth\dp\ipbox}

{\setbox\ipbox=\hbox{$\scriptscriptstyle \left\langle\mathstrut
#1\right.$}

\vrule height\ht\ipbox width0.25pt depth\dp\ipbox}

}\right. }
\newcommand{\dirack}[1]{\left. \mathrel{\mathchoice

{\setbox\ipbox=\hbox{$\displaystyle \left.\mathstrut
#1\right\rangle$}

\vrule height\ht\ipbox width0.25pt depth\dp\ipbox}

{\setbox\ipbox=\hbox{$\textstyle \left.\mathstrut
#1\right\rangle$}

\vrule height\ht\ipbox width0.25pt depth\dp\ipbox}

{\setbox\ipbox=\hbox{$\scriptstyle \left.\mathstrut
#1\right\rangle$}

\vrule height\ht\ipbox width0.25pt depth\dp\ipbox}

{\setbox\ipbox=\hbox{$\scriptscriptstyle \left.\mathstrut
#1\right\rangle$}

\vrule height\ht\ipbox width0.25pt depth\dp\ipbox}

} #1\right\rangle}
\newcommand{\beq}{\begin{equation}}
\newcommand{\eeq}{\end{equation}}
\def\blfootnote{\xdef\@thefnmark{}\@footnotetext}
\def\R{\mathbb{R}}
\def\-{^{-1}}
\def\Z{\mathbb{Z}}
\begin{document}

\title[Arbitrarily sparse spectra for self-affine spectral measures]{Arbitrarily sparse spectra for self-affine spectral measures}
\author{Lixiang An}
\address{[Lixiang An]School of Mathematics and Statistics,
$\&$ Hubei Key Laboratory of Mathematical Sciences,
Central China Normal University,
Wuhan 430079,
P.R. China.}

 \email{anlixianghai@163.com}

%
%
%
%

\author{Chun-Kit Lai}

\address{[Chun-Kit Lai]Department of Mathematics, San Francisco State University,
1600 Holloway Avenue, San Francisco, CA 94132.}

 \email{cklai@sfsu.edu}

\thanks{The research of Lixiang An is supported by NSFC  grant 11601175.}
\subjclass[2010]{42B10, 28A80, 42C30}
\keywords{Beurling dimension, spectral measure, self-affine measures}

\begin{abstract}
Given an expansive matrix $R\in M_d(\Z)$ and a finite set of digit $B$ taken from $ \Z^d/R(\Z^d)$. It was shown previously that if we can find an $L$ such that $(R,B,L)$ forms a Hadamard triple, then the associated fractal self-affine measure generated by $(R,B)$ admits an exponential orthonormal basis of certain frequency set $\Lambda$, and hence it is termed as a spectral measure. In this paper, we show that if $\#B<|\det (R)|$, not only it is spectral, we can also construct arbitrarily sparse spectrum $\Lambda$ in the sense that  its Beurling dimension is zero. 
\end{abstract}

\maketitle

\begin{center}
{\it In memory of Dr. Tian-You Hu\footnote{T.-Y Hu was a dedicated mathematician. He was also a great mentor and a dear friend of the authors of this paper. Academically, he introduced us into the field of spectral measures through his paper \cite{HL2008}. It was sad to hear that he passed away due to Covid-19.}}
\end{center}

\medskip
\section{Introduction}

\subsection{Definitions and main results.} Let $R\in M_{d}(\Z)$ be an expansive matrix (i.e. all of its eigenvalues have modulus strictly greater than 1). Let $B, L\subset\Z^d$ be finite sets of integer vectors  with $q:=\#L=\#B$. We say that the system  $(R, B, L)$ forms a {\it Hadamard triple} if the matrix
$$H=\frac{1}{\sqrt{q}}\left[e^{-2\pi i \langle R^{-1}b, l\rangle}\right]_{b\in B, l\in L}$$
is unitary, i.e., $H^*H=I$.  

\medskip

Given an expansive matrix $R\in M_d(\Z)$ and given  $B\subset \Z^d$. By the result of Hutchinson \cite{Hut81}, we can define the {\it affine iterated function system} (IFS) \{$\tau_b(x)=R^{-1}(x+b), x\in\R^d, b\in B$\} which has a unique compact attractor $T(R,B)$, called {\it self-affine set},  satisfying 
$$T(R, B)=\bigcup_{b\in B}\tau_b(T(R, B)).$$
 The {\it self-affine measure} (with equal weights) is the unique probability measure $\mu=\mu(R, B)$ satisfying
$$\mu(E)=\frac1{q}\sum_{b\in B}\mu(\tau_b^{-1}(E))$$
for all Borel subsets $E$ of $\R^d$. This measure is supported on the attractor $T(R, B)$.

\medskip

In a previous work, Dutkay, Hausserman and the second-named author proved the following theorem (\cite{DHL2019}, see \cite{LW2002} for the proof on $\R^1$).

\begin{theorem}
 Suppose that $(R,B,L)$ forms a Hadamard triple on $\R^d$. Then the self-affine measure $\mu(R,B)$ admits an exponential orthonormal basis $E(\Lambda) = \{e^{2\pi i \lambda\cdot x}:\lambda\in \Lambda\}$ for some countable set $\Lambda\subset {\mathbb R}^d$. 
 \end{theorem}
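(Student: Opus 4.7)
The plan is to argue along the Jorgensen--Pedersen Fourier-analytic lines, adapted to the self-affine (non-self-similar) and higher-dimensional setting. First, write the Fourier transform of $\mu=\mu(R,B)$ as the convergent infinite product
$$\hat\mu(\xi)=\prod_{n=1}^{\infty} m_B\!\left((R^{*})^{-n}\xi\right),\qquad m_B(\xi)=\frac{1}{q}\sum_{b\in B}e^{2\pi i\, b\cdot\xi}.$$
The Hadamard triple hypothesis is equivalent to the pointwise identity
$$\sum_{\ell\in L}\bigl|m_B\!\left((R^{*})^{-1}(\xi+\ell)\right)\bigr|^{2}=1\qquad(\xi\in\R^d),$$
which is the engine driving both orthogonality and (eventually) completeness.

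Second, build a candidate spectrum. Choose, at each level $n\geq 0$, a digit set $L_n$ that is congruent to $L$ modulo $R^{*}\Z^{d}$ (this freedom is crucial in higher dimension), and set
$$\Lambda_{n}=L_{0}+R^{*}L_{1}+\cdots+(R^{*})^{n}L_{n},\qquad \Lambda=\bigcup_{n\geq 0}\Lambda_{n}.$$
Verify that $E(\Lambda)=\{e^{2\pi i\lambda\cdot x}:\lambda\in\Lambda\}$ is an orthonormal family in $L^{2}(\mu)$: pairwise orthogonality of $\Lambda_{n}$ follows from the Hadamard identity applied $n$ times together with the infinite-product formula for $\hat\mu$, and a telescoping argument passes this up to $\Lambda$.

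Third, prove completeness; this is the main obstacle. Use the standard Plancherel-type criterion: $E(\Lambda)$ is an orthonormal basis of $L^{2}(\mu)$ if and only if
$$Q_{\Lambda}(\xi)\;:=\;\sum_{\lambda\in\Lambda}\bigl|\hat\mu(\xi+\lambda)\bigr|^{2}\equiv 1.$$
Orthogonality already gives $Q_{\Lambda}\leq 1$ (Bessel), so the work is in the lower bound. Here the choice of the digits $L_{n}$ matters: an arbitrary choice can produce only a Parseval frame, not an orthonormal basis, because mass can be trapped by $R^{*}$-invariant subsets lying in the zero set of the infinite product, i.e., by \emph{extreme cycles}. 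The strategy, following Dutkay--Hausserman--Lai, is to analyze the dynamics of $R^{*}$ modulo $R^{*}\Z^{d}$ on the vanishing set of $m_{B}$, classify all such obstructing invariant sets, and then construct the sequence $\{L_{n}\}$ so that each obstruction is avoided while the accumulated Parseval defect is forced to vanish in the limit. Concretely, one would build an ``almost Parseval frame tower'': at each finite level one has a $(1-\varepsilon_{n})$-frame, and by careful telescoping with $\varepsilon_{n}\downarrow 0$ one upgrades this to an actual orthonormal basis.

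Fourth, once orthogonality plus the lower frame bound are in place, the identity $Q_{\Lambda}\equiv 1$ follows and $E(\Lambda)$ is an orthonormal basis of $L^{2}(\mu)$, proving the theorem. The technically hardest step is unambiguously the invariant-set analysis in the third paragraph; everything else is a systematic unwinding of the Hadamard identity through the infinite product representation of $\hat\mu$.
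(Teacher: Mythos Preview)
This theorem is not proved in the present paper; it is quoted from \cite{DHL2019} (with the one-dimensional case in \cite{LW2002}), so there is no in-paper proof to compare against directly. That said, the paper exposes enough of the \cite{DHL2019} machinery---Theorem~\ref{theorem_LW} and the quasi-product decomposition of Section~4---that one can see how your sketch lines up with the actual argument.

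Your first two steps (the infinite-product formula for $\hat\mu$, the Hadamard identity, the nested orthogonal sets $\Lambda_n$) are correct and match Section~3. Your completeness criterion $Q_\Lambda\equiv 1$ is Lemma~\ref{JP-crit}, and the sufficient condition actually used is the tail bound $\delta(\Lambda)=\inf_{k}\inf_{\lambda_k\in\Lambda_k}|\widehat{\mu}_{>k}(\lambda_k)|^{2}>0$ of Theorem~\ref{theorem_LW}.

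Where your outline diverges is in the hard step. You frame it as an extreme-cycle analysis for the dynamics of $(R^{*})^{-1}$ on the zero set of $m_B$, followed by an almost-Parseval-frame-tower construction. That is the flavour of the earlier Dutkay--Jorgensen and \L aba--Wang work, and it is precisely the approach that needed extra hypotheses (reducibility, no-overlap) because it does not close in full generality. The \cite{DHL2019} proof instead pivots on the \emph{periodic zero set} $\mathcal Z(\mu)=\{\xi:\widehat\mu(\xi+k)=0\ \forall k\in\Z^d\}$: if $\mathcal Z(\mu)=\emptyset$, a compactness argument (Proposition~\ref{equi-positive}) lets one choose the digit sets so that $\delta(\Lambda)>0$ directly; if $\mathcal Z(\mu)\neq\emptyset$, one conjugates $(R,B)$ into the quasi-product form \eqref{R-quasi-product}--\eqref{B-quasi-product}, fibres $\mu$ over a lower-dimensional self-affine measure $\mu_1$ with $\mathcal Z(\mu_1)=\emptyset$, and inducts on dimension (Theorem~\ref{quasi-prod-form-decomp}). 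Your sketch does not mention this dichotomy or the quasi-product reduction, and without it the ``avoid the obstructing invariant sets'' step is exactly where the pre-2019 attempts stalled. So the outline is right in spirit, but the mechanism you name for completeness is not the one that actually works unconditionally.
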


\medskip

We say that a  Borel probability measure $\mu$ on ${\mathbb R}^d$ is called a {\it spectral measure} if we can find a countable set $\Lambda\subset{\mathbb R}^d$ such that the set of exponential functions $E(\Lambda): = \{e^{2\pi i \lambda \cdot x}:\lambda\in\Lambda\}$ forms an orthonormal basis for $L^2(\mu)$. If such $\Lambda$ exists, then $\Lambda$ is called a {\it spectrum} for $\mu$.  The above theorem said that $\mu(R,B)$ is a spectral measure if $(R,B,L)$ forms a Hadamard triple with some $L\subset{\mathbb Z}^d$.

\medskip

In this paper, We study in more detail about the sparseness of the spectrum as measured by Beurling dimension.

\begin{definition}\label{Beurling_dimension} Denote  $Q^d_h(x)=x+[-h, h]^d$ be the cube centered at $x$ on $\R^d$.
\medskip

(1) Let $\Lambda$ be a discrete subset of ${\mathbb R}^d$. For $r>0$, the {\it upper Beurling density}  corresponding to $r$ (or {\it $r$-Beurling density}) is defined by
$$D_r^+(\Lambda)=\limsup_{h\to\infty}\sup_{x\in{\mathbb R}^d}\frac{\#(\Lambda\cap Q_h^d(x))}{h^r}.$$

\bigskip
(2) The {\it upper Beurling dimension} (or simply the {\it Beurling dimension}) of $\Lambda$ is defined by
$$
\dim^+(\Lambda):=\sup\{r>0: D_r^+(\Lambda)>0\}=\inf\{r>0: D_r^+(\Lambda)<\infty\}.
$$
\end{definition}

\medskip

Our main result is the following. 

\begin{theorem}\label{main-thm}
If $(R, B, L)$ forms a Hadamard triple on $\R^d$ with $\#B<|\det R|$, then the spectral measure $\mu(R, B)$ admits a spectrum $\Lambda$ with Beurling dimension zero. 
\end{theorem}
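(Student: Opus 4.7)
The plan is to realize the spectrum as a super-sparse tower
\[
\Lambda \;=\; \bigcup_{n \geq 1} \Lambda_n, \qquad \Lambda_n \;=\; L_0 + R^T L_1 + (R^T)^2 L_2 + \cdots + (R^T)^{n-1} L_{n-1},
\]
where each $(R,B,L_k)$ is itself a Hadamard triple with $0 \in L_k$, and the pairwise separations of the elements of $L_k$ are all at least a prescribed threshold $M_k$. The flexibility comes from the following shift observation: if $L=\{0,l_1,\ldots,l_{q-1}\}$ is any Hadamard partner of $(R,B)$ and $v_1,\ldots,v_{q-1}\in\Z^d$ are arbitrary, then $\{0,l_1+R^T v_1,\ldots,l_{q-1}+R^T v_{q-1}\}$ is also a Hadamard partner, because $e^{-2\pi i R^{-1}b\cdot R^T v_i}=e^{-2\pi i b\cdot v_i}=1$ whenever $b,v_i\in\Z^d$. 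Fix a super-exponential sequence, say $M_k=e^{e^k}$; this shift procedure produces partners $L_k$ whose elements lie on the sphere of radius comparable to $M_k$ with pairwise spacing at least $M_k$.

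To verify $\Lambda$ is a spectrum, I would follow the Parseval route of \cite{DHL2019}. Let $m_B(\xi)=\frac{1}{q}\sum_{b\in B}e^{-2\pi i b\cdot\xi}$. The Hadamard condition for $(R,B,L_k)$ is equivalent to the identity $\sum_{\ell\in L_k}|m_B(R^{-T}(\xi+\ell))|^2=1$ for all $\xi$; combined with $\widehat{\mu}(\xi)=m_B(R^{-T}\xi)\widehat{\mu}(R^{-T}\xi)$ and the $\Z^d$-periodicity of $m_B$, an induction establishes that the partial sums $Q_n(\xi):=\sum_{\lambda\in\Lambda_n}|\widehat{\mu}(\xi+\lambda)|^2$ satisfy $0\le Q_n\le 1$, are non-decreasing in $n$, and obey the recursion
\[
Q_n(\xi)\;=\;\sum_{\ell\in L_0}|m_B(R^{-T}(\xi+\ell))|^2\;Q_{n-1}^{(1)}\!\bigl(R^{-T}(\xi+\ell)\bigr),
\]
where $Q_{n-1}^{(1)}$ is the analogous sum built from $(L_1,L_2,\ldots)$. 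This already gives orthonormality of $E(\Lambda)$ in $L^2(\mu)$. The hard part is then Parseval, i.e.\ $Q_n(\xi)\to 1$ a.e. Here the hypothesis $\#B<|\det R|$ is essential: I would adapt the no-cycle / equi-contraction argument of \cite{DHL2019} to the setting of varying $L_k$, defining the exceptional set $E_\infty=\{\xi:\lim_n Q_n(\xi)<1\}$, showing it is invariant under the dual IFS $\tau_\ell^*(\xi)=R^{-T}(\xi+\ell)$, and forcing $|E_\infty|=0$. The strict inequality $\#B<|\det R|$ is what excludes the invariant cycles contained in the zero set of $m_B$ that would otherwise sustain $E_\infty$; the corresponding construction in the Lebesgue case $\#B=|\det R|$ provably fails to span, consistent with the requirement that an absolutely continuous measure have spectra of positive Beurling density.

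For the Beurling dimension estimate, let $\rho>1$ be the smallest singular value of $R^T$, so $\|(R^T)^k v\|\ge\rho^k\|v\|$. Fix any reference point $\lambda_0=\sum_k(R^T)^k\ell_k^{(0)}\in\Lambda$; if $\lambda=\sum_k(R^T)^k\ell_k\in\Lambda$ satisfies $\|\lambda-\lambda_0\|_\infty\le 2h$, then by comparing the top-order non-matching term $(R^T)^k(\ell_k-\ell_k^{(0)})$, which has norm at least $\rho^k M_k$, against the super-exponentially smaller sum of lower-order contributions, one is forced to have $\ell_k=\ell_k^{(0)}$ for every $k>K(h)$, where $K(h)=O(\log\log h)$. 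Since any cube $Q_h^d(x)$ that meets $\Lambda$ is contained in a cube of double size centered at some $\lambda_0\in\Lambda$, we get
\[
\#(\Lambda\cap Q_h^d(x))\;\le\;q^{K(h)+1}\;=\;(\log h)^{O(1)}\;=\;o(h^r)\quad\text{for every }r>0,
\]
uniformly in $x$, which gives Beurling dimension zero. The principal obstacle is the completeness argument of the middle paragraph: orthogonality and the counting are essentially bookkeeping, but ruling out the invariant exceptional set $E_\infty$ is where the strict inequality $\#B<|\det R|$ must be used in an essential way.
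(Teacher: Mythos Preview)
Your orthogonality and Beurling-dimension counting are essentially fine (modulo the minor caveat that for a general expansive $R$ the smallest \emph{singular} value of $R^T$ need not exceed $1$; work in an adapted norm or with the spectral radius of $R^{-T}$ instead). The real gap is in completeness, and your diagnosis of where $\#B<|\det R|$ enters is incorrect.

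First, even in the favorable case, arbitrary large $R^T$-shifts do not guarantee completeness. In the paper's argument (Theorem \ref{main-thm2}) the shifts are not chosen merely for size: they are taken from the sets $\mathcal{K}_{\xi,\epsilon_0}=\{k\in\Z^d:|\widehat\mu(\xi+k)|\ge\epsilon_0\}$, which are shown to be infinite precisely when $\mathcal{Z}(\mu)=\emptyset$ (Theorem \ref{th1.3}). Only with this choice does the Strichartz tail condition $\delta(\Lambda)>0$ of Theorem \ref{theorem_LW} hold. Your shifts are selected only to be large, so nothing prevents $|\widehat\mu((R^T)^{-m_k}\lambda_k)|$ from being arbitrarily small, and the ``no-cycle / equi-contraction'' sketch does not transfer to a time-varying sequence $(L_k)$ without substantial new work.

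Second, and more seriously, the assertion that ``$\#B<|\det R|$ is what excludes the invariant cycles'' is false. The inequality $\#B<|\det R|$ forces $\mu$ to be singular; it does \emph{not} force $\mathcal{Z}(\mu)=\emptyset$. When $\mathcal{Z}(\mu)\neq\emptyset$ the paper shows (Theorem \ref{quasi-prod-form-decomp}) that, after conjugation, $(R,B)$ has a quasi-product form on $\R^r\times\R^{d-r}$: the first factor $\mu_1$ is self-affine with $\mathcal{Z}(\mu_1)=\emptyset$, while for $\mu_1$-a.e.\ $x_1$ the conditional measure $\mu_2^{(x_1)}$ is a tiling measure whose spectrum is a \emph{full-rank lattice} $\Gamma\subset\Z^{d-r}$. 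Every spectrum of $\mu$ built this way has the shape $\bigcup_{\gamma\in\Gamma}(\Lambda_\gamma\times\{\gamma\})$ with each $\Lambda_\gamma$ a spectrum of $\mu_1$ (Theorem \ref{product-spectrum}); the lattice factor $\Gamma$ cannot be thinned, so a single tower of uniformly sparse $L_k$'s as you propose cannot produce a spectrum here. The paper achieves Beurling dimension zero by letting the lacunary spectra $\Lambda_\gamma$ \emph{vary with} $\gamma$ --- taking $\Lambda_{m_{n,j}}=\Lambda_{m_n^0}+j\cdot 2^{N_{n,r}}{\bf e}_1$ with $\Lambda_{m_n^0}$ chosen $8^{N_{n,r}+1}$-lacunary --- so that points lying over nearby lattice sites in $\Gamma$ are pushed far apart in the $\R^r$ direction, and then doing a double count over dyadic annuli in $\Z^{d-r}$. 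That fiber-shifting idea, not the hypothesis $\#B<|\det R|$, is what kills the Beurling dimension in the non-empty periodic-zero-set case, and it is absent from your proposal.
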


\medskip

\subsection{Historical overview.} Historically, spectral measure was first studied by Fuglede who introduced the notion of spectral sets and explored its relationship with translational tile \cite{Fug1974}.  The first singularly continuous spectral measure was found by Jorgensen and Pedersen \cite{JP1998}. They showed that  the standard middle-fourth Cantor measures $\mu_4$ is spectral, while the middle-third Cantor measure $\mu_3$ is not spectral. $\mu_4$ is generated by $R = 4$ and $B = \{0,2\}$. $(R,B,L)$ forms a Hadamard triple with $L = \{0,1\}$. Using $L$, they found that  one of the spectra of $\mu_4$ is given by 
$$
\Lambda_0  =  \left\{\sum_{j=0}^{n-1}4^j\epsilon_j: \epsilon_j\in\{0,1\}, n\ge 1\right\}.
$$
This is, however, not the only spectrum for $\mu_4$. We can also see that $(4,\{0,2\}, L_n)$ with $L_n = \{0,5^n\}$  also form Hadamard triples.  Indeed, one can also show that $5^n \Lambda_0$ all are spectra of $\mu_4$. A direct calculation shows that all these spectra have Beurling dimension $\ln 2/\ln 4$.

\medskip

In an attempt to capture the right density condition for the spectra of $\mu_4$,  Dutkay, Han, Sun and Weber \cite{DHSW2011} proposed the notion of Beurling dimension, and they brought this notion from  the study of Gabor pseudo-frame in \cite{CKS2008}.  In \cite{DHSW2011}, the authors showed that all spectra of $\mu_4$ must have Beurling dimension at most $\ln2/\ln 4$ which is the Hausdorff dimension of the attractor.  Under a technical condition on the spectrum $\Lambda$, a spectrum of $\mu$ must have a Beurling dimension $\ln2/\ln4$. It used to be a conjecture that the technical condition can be removed.  However, Dai, He and the second-named author disproved the conjecture by exhibiting  a spectrum of Beurling dimension zero for $\mu
_4$ \cite{DHL2013}. The existence of sparse spectra with Beurling dimension zero is also true for other one-dimensional self-similar measures whose digit sets are consecutive $\{0,1,...,q-1\}$ \footnote{In private communication with Y. Wang and B. Strichartz,  they also have noticed such arbitrarily sparse behavior of spectra of $\mu_4$ around 2000}.  Our main Theorem \ref{main-thm} now further generalizes the behavior of arbitrarily sparseness of spectra in Beurling dimension to all singular self-affine measures generated by Hadamard triples.

\medskip

The arbitarily sparseness behavior was in stark contrast with the classical cases. The classical result of  Landau \cite{Lan1967} showed that if $\Lambda$ is a spectrum for $L^2(\Omega)$ and $\Omega\subset \R^d$ (or more generally $E(\Lambda)$ is a Fourier frame for $L^2(\Omega)$),  then the $d$-Beurling density of $\Lambda$ must be  at least the Lebesgue measure of $\Omega$ and thus its Beurling dimension must be $d$.  Therefore arbitrarily sparse spectrum does not exist for Lebesgue measure. For a simple proof of Landau's theorem, one can consult  \cite[Chapter 5]{OU}. Landau's theorem is now fundamental in modern sampling theory (see e.g. \cite{Chr,OU} for details). 

\medskip

The arbitrarily sparseness makes us ask naturally if there is any lower bound for the Beurling dimension of the spectra of a spectral measure. The following proposition provides a simple but useful answer. 

\begin{proposition}\label{prop1.4}
Let $\mu$ be a finite  Borel and singluar measure on $\R^d$ such that its Fourier transform satisfies that for $|\xi|$ large enough,
\begin{equation}\label{decay condition}
|\widehat{\mu}(\xi)|^2 \le C|\xi|^{-\gamma}
    \end{equation}
and let $E(\Lambda)$ be a set of exponentials such that there exists $A>0$, 
\begin{equation}\label{eqframelower}
A \|f\|^2 \le \sum_{\lambda\in\Lambda} \left|\int f(x)e^{-2\pi i \lambda\cdot x}d\mu(x)\right|^2 ,\quad  \forall f\in L^2(\mu).
\end{equation}
Then $\gamma\le \dim^{+}(\Lambda)$.
\end{proposition}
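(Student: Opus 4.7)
The plan is to argue by contradiction: if $\dim^+(\Lambda) < \gamma$, I would construct a sequence $\xi_H \in \R^d$ along which $\sum_{\lambda\in\Lambda}|\widehat{\mu}(\lambda-\xi_H)|^2 \to 0$. Applied to the test function $f(x) = e^{2\pi i \xi_H \cdot x}$, which has $\|f\|_{L^2(\mu)}^2 = \mu(\R^d)$ and $\int f(x) e^{-2\pi i \lambda \cdot x} d\mu(x) = \widehat{\mu}(\lambda - \xi_H)$, the frame inequality \eqref{eqframelower} becomes $A\mu(\R^d) \le \sum_\lambda |\widehat{\mu}(\lambda - \xi_H)|^2$, so the vanishing of the right-hand side will contradict $A\mu(\R^d)>0$. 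First I would fix $r$ with $\dim^+(\Lambda) < r < \gamma$; then $D_r^+(\Lambda) = 0$, and hence $\#(\Lambda \cap Q_H^d(x)) \le \tfrac12 H^r$ for every $x \in \R^d$ and every $H \ge H_0$. A preliminary observation is that one may take $r<d$: indeed, since $\mu$ is singular, Plancherel forces $\widehat{\mu}\notin L^2(\R^d)$, whereas the trivial bound $|\widehat{\mu}|\le \mu(\R^d)$ on $\{|\xi|\le M\}$ combined with \eqref{decay condition} would put $\widehat{\mu}$ in $L^2(\R^d)$ whenever $\gamma>d$. Thus $\gamma\le d$, and $r$ can be chosen below $d$.

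The geometric core of the argument is a ``hole'' construction. I would partition $Q_H^d(0)$ into $(2H/\ell_H)^d = 2^d H^r$ congruent sub-cubes of side $\ell_H = H^{1-r/d}$. Since there are at most $\tfrac12 H^r$ points of $\Lambda$ in $Q_H^d(0)$, which is strictly fewer than $2^d H^r$, pigeonhole yields a sub-cube disjoint from $\Lambda$. Its center $\xi_H$ therefore satisfies $d(\xi_H, \Lambda) \ge R_H := \tfrac12\ell_H = \tfrac12 H^{1-r/d}$, and $R_H \to \infty$ as $H\to\infty$ precisely because $r<d$.

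For $H$ large enough that $R_H$ exceeds the decay threshold in \eqref{decay condition}, every $\lambda \in \Lambda$ satisfies $|\lambda - \xi_H|\ge R_H$, so the decay bound applies uniformly. Decomposing $\Lambda$ into dyadic shells $A_k = \{\lambda : 2^k R_H \le |\lambda - \xi_H| < 2^{k+1} R_H\}$, and combining $|\widehat{\mu}(\lambda-\xi_H)|^2 \le C(2^k R_H)^{-\gamma}$ on $A_k$ with the density estimate $\#A_k \le \tfrac12 (2^{k+1}R_H)^r$, I would obtain
\[
\sum_{\lambda\in\Lambda} |\widehat{\mu}(\lambda-\xi_H)|^2 \;\le\; C' R_H^{r-\gamma} \sum_{k\ge 0} 2^{k(r-\gamma)}.
\]
The geometric series converges since $r<\gamma$, and $R_H^{r-\gamma}\to 0$ as $H\to\infty$, yielding the desired contradiction. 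The main obstacle is the hole-finding step, which needs $r<d$; this is exactly where the singularity of $\mu$, via the Plancherel observation above, is used essentially to rule out the otherwise problematic regime $\gamma>d$.
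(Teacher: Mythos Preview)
Your proof is correct and takes a genuinely different route from the paper's. Both arguments test \eqref{eqframelower} with $f(x)=e^{2\pi i\langle\xi,x\rangle}$ and then estimate $\sum_{\lambda}|\widehat\mu(\lambda-\xi)|^2$ by a dyadic shell decomposition around a center $\xi$ lying far from $\Lambda$; the difference is in how singularity of $\mu$ is used to produce such centers. The paper invokes the external result \cite[Proposition~2.1]{HLL2013} that for singular $\mu$ any $\Lambda$ satisfying \eqref{eqframelower} has $d$-dimensional \emph{lower} Beurling density zero, which immediately yields, for every $h$, a cube $Q_h^d(\xi_h)$ disjoint from $\Lambda$. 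You instead extract from singularity only the elementary Plancherel consequence $\widehat\mu\notin L^2(\R^d)$, forcing $\gamma\le d$; this lets you choose $r<d$, and then your pigeonhole count inside $Q_H^d(0)$ manufactures holes of side $\sim H^{1-r/d}\to\infty$. Your approach is more self-contained and isolates precisely how little of singularity is needed (just $\gamma\le d$); the paper's approach is shorter once the cited density result is granted and gives holes of every prescribed size directly. One cosmetic point: your partition needs $2H/\ell_H$ to be an integer, but this is harmless—either restrict to a suitable sequence of $H$ or replace the exact partition by a cover and absorb the constant.
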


It is known that supremum of $\gamma$ such that (\ref{decay condition}) holds is called the {\it Fourier dimension} of $\mu$. This proposition implies that the Beurling dimension is at least the Fourier dimension of $\mu$. For self-affine measure of our consideration, it is easy to prove that all such measures have Fourier dimension zero.

\medskip

Indeed, a stronger result was  proved in \cite{ILLW} in which if (\ref{decay condition}) holds for a spectral measure, then its spectrum must satisfy 
\begin{equation}\label{eqinfinite}
\sum_{\lambda\in \Lambda\setminus\{0\}} |\lambda|^{-\gamma} = \infty.
\end{equation}
One can show that (\ref{eqinfinite}) implies that $\gamma\le \mbox{dim}^{+}(\Lambda)$. We here will provide another independent proof of Proposition \ref{prop1.4}.  

\medskip

Using (\ref{eqinfinite}), one can show that the surface measure of any convex body with everywhere positive Gaussian curvature does not admit any Fourier frame, and  is therefore not spectral. In view of this result, an interesting problem that arises but not yet appeared to have a simple answer is that:

\medskip

{\bf Question:} Does there exist a singular spectral measure whose Fourier dimension is positive? 

\medskip

For some more results about Beurling dimension, Fourier decay and spectral measures, one can also refer to \cite{HKTW,Lev,shi}. In \cite{shi}, the relationship of different dimensions were studied and the above question was also mentioned. 

\medskip

It is also worth mentioning that a widely open problem is to determine if $\mu_3$ admits a Fourier frame or Riesz basis. Beurling dimension has been an indicator to see if such frame is possible to exist \cite{DHW2011}.   It was recently found that it is possible to construct an exponential Riesz sequence (note that a complete Riesz sequence will be a Riesz basis) with maximal Beurling dimension $\log_23$ \cite{DEL}.



\subsection{Sketch of the proof.}
We now sketch the proof of Theorem \ref{main-thm}. First, it is known that in the Hadamard triple, $B$ must be a distinct respresentative in the group $\Z^d/R(\Z^d)$. Therefore, $\#B\le |\det(R)|$.
When $\#B=|\det R|$, then $B$ is a distinct respresentative in the group $\Z^d/R(\Z^d)$.  $\mu$ is just the Lebesgue measure supported on the fundamental domain $T(R, B)$. Hence the spectra of spectral measure $\mu$ has Beurling dimension $d$. Therefore,  $\#B<|\det R|$ is necessary in the assumption. In this case, $\mu$ is singular to the Lebesgue measure. 

\medskip

Throughout the paper, we will assume, without loss of generality, $0\in B\cap L$. Otherwise, we do a translation of the measure. Similar the strategy of the proof in \cite{DHL2019},  for any  singular to the Lebesgue measure $\mu$ on $\R^d$, its {\it periodic zero set} is defined as follows:
$$
{\mathcal Z}(\mu) = \{\xi\in\R^d: \widehat{\mu}(\xi+k) = 0, \ \forall\ k\in\Z^d\}.
$$
Our proof of Theorem \ref{main-thm} is divided into two cases ${\mathcal Z}(\mu)=\emptyset$ or ${\mathcal Z}(\mu)\neq\emptyset$. 

\begin{definition}
We say  that a countable set $\Lambda = \{\lambda_n\}_{n=0}^{\infty}\subset\R^d$ is called  {\it $b$-lacunary}, if $\lambda_0=0$, $|\lambda_1|\ge b$ and for all $n\ge 1$, 
$$
|\lambda_{n+1}|\ge b  |\lambda_n|.
$$
\end{definition}
 As we will see, lacunary sequences must have Beurling dimension (See Proposition \ref{no-dim-of-lacu}). Our theorem in this case is as follows:

\medskip

\begin{theorem}\label{main-thm2}
If $(R, B, L)$ forms a Hadamard triple with $\#B<|\det R|$ and ${\mathcal Z}(\mu(R, B))=\emptyset$, then for all $b>1$, the spectral measure $\mu(R, B)$ admits a b-lacunary spectrum $\Lambda$.   
\end{theorem}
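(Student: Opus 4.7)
The plan is to construct the spectrum $\Lambda = \{\lambda_n\}_{n\ge 0}$ recursively starting from $\lambda_0 = 0$, exploiting two freedoms built into the Hadamard triple framework. First, if $(R,B,L)$ is a Hadamard triple and we replace any $l \in L$ by $l + Rk$ with $k \in \Z^d$, the unitarity of the associated Hadamard matrix is preserved, because $e^{-2\pi i \langle R^{-1}b,\, l+Rk\rangle} = e^{-2\pi i \langle R^{-1}b,\, l\rangle}$ whenever $b, k \in \Z^d$. Consequently the elements of the dual $L$ can be translated componentwise to lie arbitrarily far from the origin without disturbing the Hadamard property. Second, the tower $(R^n, B_n, L_n)$ with $B_n = \sum_{j=0}^{n-1} R^j B$ and $L_n = \sum_{j=0}^{n-1} (R^T)^j L$ is again a Hadamard triple, supplying candidate frequencies at arbitrarily fine scales.

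For the inductive step, suppose $\lambda_0,\ldots,\lambda_n$ form a $b$-lacunary orthogonal set in $L^2(\mu)$, so that $\widehat{\mu}(\lambda_i-\lambda_j)=0$ for all $i\ne j$. Using the factorization $\widehat{\mu}(\xi)=\prod_{j=1}^\infty m_B((R^T)^{-j}\xi)$ together with the fact that $m_B$ vanishes on each coset $(R^T)^{-1}(l + \Z^d)$ for $l \in L\setminus\{0\}$, the zero set of $\widehat{\mu}$ is rich at every scale. I would then select $\lambda_{n+1}$ from a sufficiently deep level $L_N$ of the tower, after applying large shifts to its component $L$'s via the first freedom, so that simultaneously $|\lambda_{n+1}|\ge b|\lambda_n|$ and $\widehat{\mu}(\lambda_{n+1}-\lambda_k)=0$ for each $k\le n$. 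The strict inequality $\#B<|\det R|$ is what guarantees that $m_B$ admits at least one extra vanishing coset beyond those forced by the previously chosen $\lambda_k$'s, so the selection can always be completed.

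The main obstacle will be establishing completeness, i.e., that $\overline{\mathrm{span}}\, E(\Lambda) = L^2(\mu)$, since a lacunary orthogonal set is by no means automatically a spectrum. My plan is to adapt the completeness framework from \cite{DHL2019}: the hypothesis $\mathcal{Z}(\mu(R,B))=\emptyset$ rules out any nontrivial invariant-set obstruction for the dynamics induced by $(R^T)^{-1}$ on the torus $\mathbb{T}^d$, so any \emph{maximal} orthogonal set produced through the Hadamard tower procedure is automatically a spectrum. To reconcile lacunarity with maximality, I would interlace the construction via a diagonal argument: at stage $n$ the next lacunary element must be chosen not only to satisfy the orthogonality and spacing requirements above, but also to lie on a specifically prescribed branch of the tower, chosen so that the finitely many tower paths not yet used at step $n$ are systematically absorbed into the orthogonal complement at later steps. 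Verifying that this interlacing can be executed consistently—so that the resulting sparse $\Lambda$ is simultaneously $b$-lacunary and maximal, hence a spectrum—will be the technical heart of the argument, and the freedom to translate $L$ by $R\Z^d$ without changing the Hadamard matrix is precisely what provides enough room for the diagonalization to succeed.
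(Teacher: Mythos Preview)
Your construction of an orthogonal $b$-lacunary set is on the right track: the freedom to replace $l$ by $l+(R^t)^n k$ (note the transpose) while preserving the Hadamard property is exactly what is used in the paper. The genuine gap is in completeness. Your proposed route---show the set is maximal orthogonal and invoke a claim that ``$\mathcal Z(\mu)=\emptyset$ implies any maximal orthogonal set coming from the tower is a spectrum''---does not hold up. Maximal orthogonal sets need not be spectra even for spectral measures, and nothing in \cite{DHL2019} establishes such a principle; the empty periodic zero set removes the obstruction to constructing \emph{some} spectrum via the tower, but it does not upgrade arbitrary maximal orthogonal sets to bases. Your diagonal/interlacing sketch never explains why absorbing the unused tower branches would terminate in completeness rather than merely in maximality.

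The paper's mechanism is different and quantitative. Completeness is obtained from the Strichartz-type criterion (Theorem~\ref{theorem_LW}): if $\delta(\Lambda)=\inf_k\inf_{\lambda\in\Lambda_k}|\widehat{\mu}_{>k}(\lambda)|^2>0$ then $\Lambda$ is a spectrum. The hypothesis $\mathcal Z(\mu)=\emptyset$ is used not as a dynamical obstruction-remover but to prove Theorem~\ref{th1.3}: there is a \emph{uniform} $\epsilon_0>0$ such that for every $\xi\in[0,1]^d$ the set $\mathcal K_{\xi,\epsilon_0}=\{k\in\Z^d:|\widehat\mu(\xi+k)|\ge\epsilon_0\}$ is infinite. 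This is the missing ingredient: at each stage of the tower you choose the shifts $k$ from the infinite set $\mathcal K_{\xi,\epsilon_0}$, which simultaneously (i) lets you push $|\lambda_{n+1}|\ge b|\lambda_n|$ because $\mathcal K_{\xi,\epsilon_0}$ contains arbitrarily large elements, and (ii) guarantees $|\widehat{\mu}_{>k}(\lambda)|\ge\epsilon_0/2$ uniformly, so $\delta(\Lambda)>0$ and completeness follows directly. Your proposal never establishes any uniform lower bound on the tail transforms, and without it the $b$-lacunary orthogonal set you build has no reason to be complete.
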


\medskip

The case ${\mathcal Z}(\mu(R, B))\neq\emptyset $ is more complicated. Our strategy is to reduce the self-affine pair $(R, B)$ to a pair $(\widetilde{R}, \widetilde{B})$ which has quasi product-form structure. The  self-affine measure $\mu(\widetilde{R}, \widetilde{B})$ projects on $\R^r$ is a self-affine measure $\mu(\widetilde{R_1}, \widetilde{B_1})$ satisfies ${\mathcal Z}(\mu(\widetilde{R_1}, \widetilde{B_1}))=\emptyset$. Then we can construct a spectrum of $\mu(R, B)$ has zero Beurling dimension.

\medskip

We organize our paper as follows: In section 2, we present some preliminaries. We will review the property of Beurling dimension of a discrete set and the condition ${\mathcal Z}(\mu)=\emptyset$. In section 3 , we will prove Theorem \ref{main-thm2}. In section 4, we will conjugate with some matrix so that $(R, B)$ are of the quasi-product form and complete the proof of  Theorem \ref{main-thm}. We will finally prove Proposition \ref{prop1.4} in the last section. 

\medskip

\section{Preliminaries}
In this section, we will set up some basic propositions for the rest of our paper. These results that serve as the basis for our proofs.

\bigskip

\subsection{Beurling dimension.}  
We will establish some basic properties of Beurling dimension in this subsection. 
\begin{proposition}\label{conjugate}
Let $R$ be an invertible matrix in $M_d(\R)$ and $\Lambda\subset\R^d$ is a discrete set. Then $\dim^+(\Lambda)=\dim^+(R\Lambda).$
\end{proposition}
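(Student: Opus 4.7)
The plan is to show that an invertible linear map $R$ distorts the counting function $\#(\Lambda\cap Q_h^d(x))$ only by a multiplicative constant in the cube side length, so that the Beurling density $D_r^+$ can change only by a factor of the form $C^r$, and hence the threshold dimension is preserved.

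First I would rewrite the counting function on $R\Lambda$ via a change of variable: $\#(R\Lambda\cap Q_h^d(x)) = \#(\Lambda\cap R^{-1}Q_h^d(x))$. The set $R^{-1}Q_h^d(x)$ is an affine parallelepiped centred at $R^{-1}x$, so the key geometric input is that this parallelepiped sits inside an axis-aligned cube whose side scales linearly with $h$. Concretely, if $C$ denotes the $\ell^\infty\to\ell^\infty$ operator norm of $R^{-1}$, then $R^{-1}[-h,h]^d\subseteq[-Ch,Ch]^d$, hence
$$\#(R\Lambda\cap Q_h^d(x))\le \#(\Lambda\cap Q_{Ch}^d(R^{-1}x)).$$
Taking the supremum over $x$, dividing by $h^r$, and rewriting the right-hand side as $C^r\cdot \#(\Lambda\cap Q_{Ch}^d(R^{-1}x))/(Ch)^r$, the $\limsup$ as $h\to\infty$ yields $D_r^+(R\Lambda)\le C^r D_r^+(\Lambda)$.

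Next I would apply exactly the same argument with $R$ replaced by $R^{-1}$ and $\Lambda$ replaced by $R\Lambda$. Letting $C'$ denote the $\ell^\infty\to\ell^\infty$ norm of $R$, this gives $D_r^+(\Lambda)\le (C')^r D_r^+(R\Lambda)$. The two inequalities show that for every $r>0$, $D_r^+(\Lambda)>0$ if and only if $D_r^+(R\Lambda)>0$, and $D_r^+(\Lambda)<\infty$ if and only if $D_r^+(R\Lambda)<\infty$. Plugging this equivalence into either characterization of the Beurling dimension in Definition \ref{Beurling_dimension}(2) gives $\dim^+(\Lambda)=\dim^+(R\Lambda)$.

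There is no real obstacle here; the only thing to be a little careful about is the geometric containment step (parallelepiped inside a cube), where one should use a norm-based estimate rather than anything involving $|\det R|$, since the proposition is about how $R$ changes the \emph{shape} of cubes, not their volume. Everything else is bookkeeping: the constant $C^r$ is absorbed harmlessly when one takes the sup and limsup, and the symmetric role played by $R$ and $R^{-1}$ makes the inequality an equivalence.
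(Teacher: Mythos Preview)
Your proposal is correct and follows essentially the same approach as the paper: both use the linear containment $R^{-1}Q_h^d(x)\subset Q_{Ch}^d(R^{-1}x)$ to compare $D_r^+(R\Lambda)$ with $C^r D_r^+(\Lambda)$, and then conclude that the Beurling dimension is unchanged. The only cosmetic difference is that the paper records the inner and outer containments $Q_{c_2h}^d(0)\subset R^{-1}Q_h^d(0)\subset Q_{c_1h}^d(0)$ simultaneously to get both inequalities at once, whereas you obtain one inequality and then invoke the symmetric role of $R$ and $R^{-1}$.
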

\begin{proof}
For the invertible matrix $R$, there exist constants $c_1> c_2>0$ such that
$$Q_{c_2}^d(0)\subset R^{-1}Q_1^d(0)\subset Q_{c_1}^d(0).$$
As $h Q_1^d(0) = Q_h^d(0)$, it implies that for all $h>0$, 
$$
Q_{c_2h}^d(0)\subset R^{-1}Q_h^d(0)\subset Q_{c_1h}^d(0).
$$
Note that $\#(\Lambda\cap Q_{c_2h}^d(y)) = \#((\Lambda-y)\cap Q_{c_2h}^d(0)).$
\begin{eqnarray*}
\#(\Lambda\cap Q_{c_2h}^d(R^{-1}x)) &= &\#((\Lambda-R^{-1}x)\cap Q_{c_2h}^d(0))\\
&\le&\#(\Lambda-R^{-1}x)\cap R^{-1}Q_h^d(0))\\
&\le&\#(\Lambda\cap Q_{c_1h}^d(R^{-1}x)).
\end{eqnarray*}
As $\#(R\Lambda \cap Q) = \#(\Lambda\cap R^{-1}Q)$, we have 
$$\#(\Lambda-R^{-1}x)\cap R^{-1}Q_h^d(0))=\#(R\Lambda\cap Q_h^d(x)).$$
We have thus obtained
$$
\#(\Lambda\cap Q_{c_2h}^d(R^{-1}x))\le \#(R\Lambda\cap Q_h^d(x))\le \#(\Lambda\cap Q_{c_1h}^d(R^{-1}x)).
$$
Dividing by $h^{r}$ and taking supremum and limsup, we have
$$c_2^rD^+_r(\Lambda)\le D^+_r(R\Lambda)\le c_1^rD^+_r(\Lambda).$$
Therefore, $\dim^+(\Lambda)=\dim^+(R\Lambda)$ follows.
\end{proof}

\medskip

As one can imagine that a b-lacunary set must be very sparse in ${\mathbb R}^d$, the following proposition gives us the affirmative answer. We will use this frequently in the rest of the paper. 

\medskip

\begin{proposition}\label{no-dim-of-lacu}
Let $b>1$ and $\Lambda$ is a $b-$lacunary set. Then $\dim^+(\Lambda)=0$.
\end{proposition}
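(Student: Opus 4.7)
The plan is to show that for any fixed $r>0$, $D_r^+(\Lambda)=0$, which by the definition of upper Beurling dimension immediately yields $\dim^+(\Lambda)=0$. The essential estimate is that $\#(\Lambda\cap Q_h^d(x))$ grows only logarithmically in $h$, uniformly in $x$, and this polylogarithmic count loses against any positive power of $h$.

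First I would record the immediate consequence of $b$-lacunarity: by induction $|\lambda_n|\ge b^n$ for every $n\ge 1$. Now fix $h>0$ and $x\in\R^d$, and suppose that $\lambda_{n_1},\dots,\lambda_{n_k}$ with $n_1<n_2<\dots<n_k$ are exactly the elements of $\Lambda$ lying in $Q_h^d(x)$. Any two points of $Q_h^d(x)$ are within distance $2h\sqrt{d}$ of each other, so $|\lambda_{n_k}-\lambda_{n_1}|\le 2h\sqrt{d}$ and hence $|\lambda_{n_k}|\le |\lambda_{n_1}|+2h\sqrt{d}$.

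The main step is a short case split. If $n_1\ge 1$, then $|\lambda_{n_1}|\ge b>0$, and iterating the lacunary inequality gives $|\lambda_{n_k}|\ge b^{n_k-n_1}|\lambda_{n_1}|$; combining these two estimates yields
$$(b^{n_k-n_1}-1)|\lambda_{n_1}|\le 2h\sqrt{d},$$
which forces $n_k-n_1\le \log_b\!\bigl(1+\tfrac{2h\sqrt{d}}{b}\bigr)$. If instead $n_1=0$, so $\lambda_{n_1}=0\in Q_h^d(x)$, then every other $\lambda_{n_j}$ in the cube satisfies $b^{n_j}\le |\lambda_{n_j}|\le 2h\sqrt{d}$, hence $n_k\le \log_b(2h\sqrt{d})$. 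In either case $k\le C\log h + C'$ with constants depending only on $b$ and $d$ and independent of $x$.

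Having uniformly bounded the count, for any $r>0$,
$$\sup_{x\in\R^d}\frac{\#(\Lambda\cap Q_h^d(x))}{h^r}\le \frac{C\log h+C'}{h^r}\longrightarrow 0\quad\text{as }h\to\infty,$$
so $D_r^+(\Lambda)=0$. Taking the supremum over $r>0$ with $D_r^+(\Lambda)>0$ over the empty set gives $\dim^+(\Lambda)=0$. I do not anticipate any substantial obstacle: the whole point is the exponential growth of $|\lambda_n|$ pitted against the polynomial rate $h^r$, and the only mild subtlety is handling the point $\lambda_0=0$ separately from the lacunary tail.
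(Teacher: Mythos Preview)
Your proof is correct and reaches the same $O(\log h)$ uniform bound on $\#(\Lambda\cap Q_h^d(x))$ that the paper obtains, from which $D_r^+(\Lambda)=0$ for every $r>0$ follows immediately. The packaging differs slightly: the paper introduces the annuli $\mathcal{A}_n=B(0,b^n)\setminus B(0,b^{n-1})$, observes that each $\mathcal{A}_n$ contains at most one point of $\Lambda\setminus\{0\}$, and then bounds the number of consecutive annuli needed to cover $Q_h^d(x)$ by $\log_b(4\sqrt{d}\,h)+2$. You instead work directly with the indices, using the diameter bound $|\lambda_{n_k}-\lambda_{n_1}|\le 2h\sqrt{d}$ together with $|\lambda_{n_k}|\ge b^{\,n_k-n_1}|\lambda_{n_1}|$ to control $n_k-n_1$ (and hence $k\le n_k-n_1+1$). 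Your argument is marginally more direct, since it avoids the annulus bookkeeping, while the paper's annulus count has the advantage of being stated as a standalone claim that is reused later in the proof of Theorem~\ref{main-thm}. Substantively the two arguments are the same idea in different clothing.
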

\begin{proof} Denote $B(x, r)$ be the open ball centered at $x\in{\mathbb R}^d$ with radius $r>0$. Let ${\mathcal A}_1=B(0, b)$ and ${\mathcal A}_n=B(0, b^n)\setminus B(0, b^{n-1})$ be the annuli regions centered at the origin for any $n\geq2$. We claim that $(\Lambda\setminus\{0\})\cap {\mathcal A}_n$ has at most one element.  In fact, if $\lambda, \lambda'\in (\Lambda\setminus\{0\})\cap {\mathcal A}_n$, then $b^{n-1}\le |\lambda|, |\lambda'|<b^{n}$. It follows that 
$$\frac{|\lambda'|}{|\lambda|}<b,\quad \frac{|\lambda|}{|\lambda'|}<b.$$
This contradicts to the definition of $b$-lacunary set.

\medskip

{\bf Claim:} If $\Lambda$ is $b$-lacunary and given any cubes $Q_h^d(x)$, 
$$
\#(\Lambda\cap Q_h^d(x)) \le \log_b(4\sqrt{d} h)+2.
$$

\medskip
 
\proof For any $h>1$  and $x\in \R^d$, we can find integers  $n_{x, h}\geq0$ and $k_{x, h}\geq1$ such that 
$$Q^d_h(x)\subset \bigcup_{i=1}^{k_{x,h}}{\mathcal A}_{n_{x,h}+i}$$
and $Q^d_h(x) \ \cap \  {\mathcal A}_{n_{x,h}+i}\neq\emptyset$ for all $i = 1,...,k_{x,h}$. Moreover,  $\text{diam}(Q^d_h(x))\geq \text{dist}({\mathcal A}_{n_{x,h}+1}, {\mathcal A}_{n_{x,h}+k_{x,h}})$. When $k_{x,h}>2$, that is  
$$2\sqrt{d}h\geq b^{n_{x,h}+k_{x,h}-1}-b^{n_{x,h}+1}\geq b^{k_{x,h}-2}-1.$$ So
\begin{equation}\label{k_x,h}
k_{x,h}\le \log_b (2\sqrt{d}h+1)+2\le \log_b(4\sqrt{d}h)+2.
\end{equation}
It is clear that when $k_{x,h}\le2$, the above inequality \eqref{k_x,h} also holds.  Combining with the claim in the first paragraph, we have that 
\begin{equation*}
   \#(\Lambda\cap Q_h^d(x)) \le  \#\left(\Lambda\cap \bigcup_{i=1}^{k_{x,h}}{\mathcal A}_{n_{x,h}+i}\right)\le k_{x,h}\le \log_b(4\sqrt{d}h)+2.
\end{equation*}
This justifies the claim. 

\medskip

Hence for any $r>0$, we have
\begin{eqnarray}\label{card-b-lacu}
\limsup_{h\to\infty}\sup_{x\in{\mathbb R}^d}\frac{\#\left(\Lambda\cap Q^d_h(x)\right)}{h^r}
&\le&\lim_{h\to\infty}\frac{\log_b(4\sqrt{d}h)+2}{h^r}=0.
\end{eqnarray}
 That is to say $D^+_r(\Lambda)=0$. From the definition of Beurling dimension, we have $\dim^+(\Lambda)=0$.
\end{proof}

\medskip

\subsection{Periodic Zero set ${\mathcal Z}(\mu)=\emptyset$}

Throughout the paper, the Fourier transform of a Borel probability measure $\mu$ on ${\mathbb R}^d$ is defined to be
$$
\widehat{\mu}(\xi) = \int e^{-2\pi i \xi\cdot x} d\mu(x).
$$
In this subsection, we will  be
devoted to understanding the condition 
$${\mathcal Z}(\mu)=\{\xi\in\R^d: \widehat{\mu}(\xi+k)=0,\ \forall\ k\in\Z^d \}=\emptyset$$
when $\mu$ is a singular measure. The following is an important observation and we will strength the conclusion further when $\mu$ is a self-affine measures in the next section. 

\medskip

\begin{proposition}\label{key-lemma}
 Suppose the periodic zero set ${\mathcal Z}(\mu)$ is empty and $\mu$ is singular. Then for all $\xi\in\R^d$, the set
$$
{\mathcal K}_{\xi} = \{k\in{\mathbb Z}^d:\widehat{\mu}(\xi+k) \ne 0 \}
$$
is an infinite set.
\end{proposition}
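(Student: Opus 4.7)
The plan is to argue by contradiction: suppose for some $\xi \in \R^d$ the set $\mathcal{K}_\xi$ is finite. If $\mathcal{K}_\xi = \emptyset$, then $\widehat{\mu}(\xi+k) = 0$ for all $k\in\Z^d$, i.e.\ $\xi \in \mathcal{Z}(\mu)$, contradicting $\mathcal{Z}(\mu)=\emptyset$ immediately. So throughout I may assume $\mathcal{K}_\xi$ is non-empty and finite, and I want to exploit singularity of $\mu$ to force all Fourier coefficients $\widehat{\mu}(\xi+k)$ to vanish anyway.

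The first step is to introduce the twisted complex measure $d\mu_\xi(x) := e^{-2\pi i \xi \cdot x}\,d\mu(x)$ on $\R^d$ and its periodization on $[0,1)^d$,
\[
\bar\mu_\xi(A) \;=\; \sum_{n \in \Z^d} \mu_\xi(A+n).
\]
A direct calculation gives $\widehat{\bar\mu_\xi}(k) = \widehat{\mu}(\xi + k)$ for every $k \in \Z^d$, so by hypothesis only the finitely many coefficients indexed by $\mathcal{K}_\xi$ are non-zero. Since a complex Borel measure on $[0,1)^d$ is determined by its Fourier coefficients, $\bar\mu_\xi$ must coincide with the absolutely continuous measure $F(x)\,dx$, where $F(x) = \sum_{k \in \mathcal{K}_\xi} \widehat{\mu}(\xi+k)\, e^{2\pi i k \cdot x}$ is a trigonometric polynomial.

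The second step is a total-variation comparison with the positive periodization $\bar\mu$ of $\mu$: for every Borel $A \subset [0,1)^d$ and every measurable partition $A = \bigsqcup_i A_i$,
\[
\sum_i |\bar\mu_\xi(A_i)| \;\le\; \sum_i \sum_{n \in \Z^d} \int_{A_i + n} |e^{-2\pi i \xi \cdot x}|\,d\mu(x) \;=\; \sum_{n \in \Z^d} \mu(A+n) \;=\; \bar\mu(A).
\]
Taking the supremum over partitions gives $|\bar\mu_\xi|(A) \le \bar\mu(A)$, so $|\bar\mu_\xi| \ll \bar\mu$. Meanwhile, since $\mu$ is singular, one can pick a Borel set $E$ with $\mu(E)=1$ and Lebesgue measure zero; its image $\bar E$ modulo $\Z^d$ is still Lebesgue-null and carries all of $\bar\mu$, so $\bar\mu$ itself is singular.

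Combining the two steps, $|\bar\mu_\xi|$ is both singular (being dominated by the singular measure $\bar\mu$) and absolutely continuous with respect to Lebesgue (since $\bar\mu_\xi = F\,dx$). A measure that is both singular and absolutely continuous must vanish, so $\bar\mu_\xi = 0$ and every $\widehat{\mu}(\xi + k)$, $k\in\Z^d$, is zero. This puts $\xi \in \mathcal{Z}(\mu)$, the desired contradiction. The step I expect to require the most care is the total-variation inequality $|\bar\mu_\xi| \le \bar\mu$: the phase $e^{-2\pi i \xi \cdot x}$ can create cancellation fibre-by-fibre, and it is precisely this inequality that prevents such cancellation from making $\bar\mu_\xi$ more regular than the underlying singular measure $\mu$ allows.
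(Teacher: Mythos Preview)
Your proof is correct and follows essentially the same route as the paper's: both twist $\mu$ by $e^{-2\pi i\xi\cdot x}$, periodize to a complex measure on the torus whose Fourier coefficients are $\widehat{\mu}(\xi+k)$, show this periodization is singular by comparison with the positive periodization of $\mu$, and then derive a contradiction from the fact that a measure with only finitely many non-zero Fourier coefficients is a trigonometric polynomial times Lebesgue. Your use of the total-variation inequality $|\bar\mu_\xi|\le\bar\mu$ is a slightly more explicit way of phrasing the paper's absolute-continuity step $\nu_{\mathbb{T}^d,\xi}\ll\nu_{\mathbb{T}^d,0}$, but the substance is identical.
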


\begin{proof}
Note that ${\mathcal K}_{\xi+k_0}={\mathcal K}_{\xi}+k_0$ for any $\xi\in\R^d$ and $k_0\in\Z^d$. So we just need to consider the set ${\mathcal K}_{\xi}$ for $\xi\in[0,1)^d$. Let $\mu_{\xi}$ be the complex measure  $e^{-2\pi i \xi\cdot x} d\mu(x)$. Consider the complex measure on ${\mathbb T}^{d}$, which we identify as $[0,1)^d$,
$$
\nu_{{\mathbb T}^d,\xi} (E)=  \sum_{n\in{\mathbb Z}^d}\mu_{\xi}(E+n)
$$
for all Borel set $E\in{\mathbb T}^d$. Then $\nu_{{\mathbb T}^d,\xi}$ is a measure on ${\mathbb T}^d$ and its Fourier coefficients equal
$$
\widehat{\nu_{{\mathbb T}^d,\xi}}(n) = \widehat{\mu_{\xi}}(n) = \widehat{\mu}(\xi+n)
$$
(For details, see \cite{K2004}). Since ${\mathcal Z}(\mu) $ is empty,  ${\mathcal K}_{\xi}$ is not an empty set and therefore, $\nu_{{\mathbb T}^d,\xi}$ is not a zero measure on ${\mathbb T}^d$. We establish the following claim:

\medskip

{\bf Claim:} $\nu_{{\mathbb T}^d,\xi}$ is singular to the Lebesgue measure on ${\mathbb T}^d$

\medskip

\noindent{\it Proof of claim:} We first note that $\nu_{{\mathbb T}^d,\xi}$ is absolutely continuous with respect to $\nu_{{\mathbb T}^d,0}$. Indeed, if
$\nu_{{\mathbb T}^d,0}(E)=0$, then $\mu(E+n) =0$ for all $n\in {\mathbb Z}^d$. Hence, $\mu_{\xi}(E+n) = 0$ and $\nu_{{\mathbb T}^d,\xi} (E) = 0$ follows. Therefore, we just need to show
that $\nu_{{\mathbb T}^d,0}$ is singular with respect to the Lebesgue measure.

\medskip

Let $m_{{\mathbb R}^d}$ and $m_{{\mathbb T}^d}$ be the Lebesgue measure on $\R^d$ and ${\mathbb T}^d$ respectively. Since $\mu$ is singular to $m_{\R^d}$, we can find a set $A$ such that $m_{\R^d}(A) = 0$ and $\mu$ is supported on $A$. For each $n\in{\mathbb Z}^d$,  let
$$A_n = \left(A\cap ([0,1)^d+n)\right)- n,$$
so that
$$
A = \bigcup_{n\in{\mathbb Z}^d} (A_n+n).
$$
Let $E = \bigcup_{n\in{\mathbb Z}^d} A_n$ and let $ {\mathbb T}^d\setminus E$ be its complement. Then
$$
\nu_{{\mathbb T}^d,0} ({\mathbb T}^d\setminus E) = \sum_{n\in{\mathbb Z}^d} \mu \left(\bigcap_{n\in{\mathbb Z}^d}\left( {\mathbb T}^d\setminus A_n\right) +n\right) \le \sum_{n\in{\mathbb Z}^d} \mu \left(\left({\mathbb T}^d\setminus A_n\right) +n\right)=0
$$
since $\mu$ is supported on $A_n+n$ on $[0,1)^d+n$. Hence, we know $\nu_{{\mathbb T}^d,0}$ is supported on $E$. But we know that
$$
m_{{\mathbb T}^d}(E) \le \sum_{n\in{\mathbb Z}^d} m_{{\mathbb T}^d} (A_n) = \sum_{n\in{\mathbb Z}^d} m (A_n+n) =0.
$$
This shows that $\nu_{{\mathbb T}^d,0}$ is a singular measure with respect to $m_{{\mathbb T}^d}.$ This justifies the claim.

\medskip

We finally argue by contradiction. Suppose that $\#{\mathcal K}_{\xi}<\infty$. Then we can find some $N_0$ such that  the measure $\widehat{\nu_{{\mathbb T}^d,\xi}}(n)  = 0$ for all $|n|>N_0$.  Then we know that the Fourier coefficients $\{\widehat{\nu_{{\mathbb T}^d,\xi}}(n): n\in\Z^d\}$ is square-summable. By the unitary isomorphism of $L^2({\mathbb T}^d)$ and $\ell^2({\mathbb Z}^d)$, we can find $f\in L^2({\mathbb T}^d)$, in fact a trigonometric polynomial, such that
$$
\widehat{\nu_{{\mathbb T}^d,\xi}}(n)= \widehat{f}(n).
$$
This means that $ \nu_{{\mathbb T}^d,\xi} = f(x)dx$, which is a contradiction since $\nu_{{\mathbb T}^d,\xi}$ is singular to the Lebesgue measure.
\end{proof}

\medskip

We remark that the above proposition is clearly false if $\mu$ is not singular. For example, if $\mu$ is the Lebesgue measure on $[0,1]$. Then ${\mathcal K}_0 = \{0\}$ only since the Fourier transform on the characteristic function of the unit interval is equal to zero on all non-zero integers. 

\medskip

\begin{proposition}\label{equi-positive}
Suppose that the periodic zero set ${\mathcal Z}(\mu)$ is empty. Then there exists $\epsilon>0$ and $\delta>0$ such that for all $\xi\in [0, 1]^d$, there exists $k_{\xi}\in\Z^d$ such that
$$|\widehat{\mu}(\xi+y+k_{\xi})|\geq\epsilon$$
whenever $|y|<\delta$. Specially, for $\xi=0$, we can take $k_0=0$.
\end{proposition}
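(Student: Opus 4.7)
The plan is to exploit the continuity of $\widehat{\mu}$ together with the compactness of $[0,1]^d$. Since ${\mathcal Z}(\mu) = \emptyset$, for every $\xi \in [0,1]^d$ there exists some $k_\xi \in \Z^d$ with $\widehat{\mu}(\xi + k_\xi) \neq 0$. By continuity of $\widehat{\mu}$, there exist an open neighborhood $U_\xi$ of $\xi$ in $\R^d$ and a number $\epsilon_\xi > 0$ such that
\[
|\widehat{\mu}(\xi' + k_\xi)| \geq \epsilon_\xi \quad \text{for all } \xi' \in U_\xi.
\]
In particular, for $\xi = 0$, since $\widehat{\mu}(0) = \mu(\R^d) = 1$, we may take $k_0 = 0$, and obtain a neighborhood $U_0$ of the origin on which $|\widehat{\mu}(y)| \geq 1/2$, say.

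Next I would apply compactness. The family $\{U_\xi : \xi \in [0,1]^d\}$ is an open cover of the compact set $[0,1]^d$, so it admits a finite subcover $U_{\xi_1}, \ldots, U_{\xi_N}$, where I arrange so that $\xi_1 = 0$ (and therefore $k_{\xi_1} = 0$). Setting $\epsilon := \min_{1\le i\le N} \epsilon_{\xi_i} > 0$, we have that for every $\xi \in [0,1]^d$ there is an index $i$ with $\xi \in U_{\xi_i}$, and hence $|\widehat{\mu}(\xi + k_{\xi_i})| \geq \epsilon$.

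To upgrade this to a uniform $\delta$, I would invoke the Lebesgue number lemma applied to the finite open cover $\{U_{\xi_i}\}$ of the compact metric space $[0,1]^d$: there exists $\delta > 0$ such that every ball $B(\xi, \delta) \cap [0,1]^d$ is contained in some $U_{\xi_i}$. (By shrinking the $U_{\xi_i}$ if necessary, we may in fact take the $\delta$-ball in $\R^d$, not just in $[0,1]^d$, to sit inside $U_{\xi_i}$.) Then for any $\xi \in [0,1]^d$ and any $|y| < \delta$, the point $\xi + y$ lies in some $U_{\xi_i}$, whence with $k_\xi := k_{\xi_i}$ we obtain $|\widehat{\mu}(\xi + y + k_\xi)| \geq \epsilon$, as required.

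The argument is essentially a straightforward compactness-plus-continuity patching; the only subtle point is to make sure that $\xi = 0$ can be accommodated with $k_0 = 0$, which is automatic from $\widehat{\mu}(0) = 1$ provided we include $U_0$ in the initial pointwise cover. There is no serious obstacle here — the proposition is a uniformization of Proposition 2.4's content.
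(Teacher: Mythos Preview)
Your proof is correct and follows essentially the same compactness-plus-continuity argument as the paper, which replaces your Lebesgue-number step by the equivalent half-radius device (cover $[0,1]^d$ by balls $B(\xi_j,\delta_{\xi_j}/2)$ and take $\delta=\min_j \delta_{\xi_j}/2$, so that $\xi\in B(\xi_j,\delta_{\xi_j}/2)$ and $|y|<\delta$ force $\xi+y\in B(\xi_j,\delta_{\xi_j})$). One small wording issue: your parenthetical about ``shrinking the $U_{\xi_i}$'' is backwards --- shrinking the cover sets makes containment harder; what actually lets the full $\R^d$-ball $B(\xi,\delta)$ (not merely its trace on $[0,1]^d$) sit inside some $U_{\xi_i}$ is exactly this half-radius trick, or equivalently the Lebesgue number lemma for a compact subset of $\R^d$ covered by open sets of $\R^d$.
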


\medskip

\begin{proof}
As ${\mathcal Z}(\mu)$ is empty, for any $\xi\in [0,1]^d$, we can find $k_{\xi}\in\Z^d$ and $\epsilon_{\xi}>0$ such that
$$
|\widehat{\mu}(\xi+k_{\xi})|\ge \epsilon_{\xi}>0.
$$
By the continuity of $\widehat{\mu}$. we can find  $\delta_{\xi}>0$ such that for all $|y|\le \delta_{\xi}$, we have
$$
|\widehat{\mu}({\xi}+y+k_{\xi})|\ge \frac{\epsilon_{\xi}}{2}.
$$
As $[0, 1]^d \subset \bigcup_{{\xi}\in [0, 1]^d} B({\xi},\delta_{\xi}/2)$, by the compactness of $[0, 1]^d$, we can find ${\xi}_1,...,{\xi}_N\in [0, 1]^d$ such that $[0, 1]^d \subset B({\xi}_1,\delta_{{\xi}_1}/2)\cup...\cup B({\xi}_N,\delta_{{\xi}_N}/2)$. We now take
$$
\delta = \min\left\{\frac{\delta_{{\xi}_j}}{2}: j=1,...,N\right\},  \
\epsilon = \min\left\{\frac{\epsilon_{{\xi}_j}}{2}: j=1,...,N\right\}.
$$
Now, $\delta$ and $\epsilon$ are positive and independent of ${\xi}\in [0, 1]^d$. We claim that the stated property holds. Indeed, for any ${\xi}\in [0, 1]^d$, ${\xi}\in B({\xi}_j,\delta_{{\xi}_j}/2)$ for some $j=1,...,N$. 
Hence,
$$
|\widehat{\mu}({\xi}+k_{{\xi}_j})| = |\widehat{\mu}({\xi}_j+({\xi}-{\xi}_j)+k_{{\xi}_j})|\ge \frac{\epsilon_{{\xi}_j}}{2}\ge \epsilon.
$$
Therefore, we just redefine $k_{{\xi}} = k_{\xi_j}$ to obtain our desired conclusion.
\end{proof}
\medskip

\section{Proof of Theorem \ref{main-thm2}}
 
 In this section, we  first outline how one can  construct a Fourier basis for the self-affine measure $\mu(R, B)$ and then prove Theorem \ref{main-thm2} that we can find $b$-lacunary spectra if the periodic zero set is empty. 
 
 \medskip
 
 Recall that for the self-affine measure $\mu = \mu(R,B)$, by iterating the invariance identity, its Fourier transform can be expressed as an infinite product 
 $$
 \widehat{\mu}(\xi) = \prod_{n=1}^{\infty} \widehat{\delta_{B}}((R^{t})^{-n}\xi) =  \prod_{n=1}^{\infty} \widehat{\delta_{R^{-n}B}}(\xi).
 $$
Here $\delta_A$ denotes the equal-weighted Dirac mass supported on the finite set $A$. From this infinite product, we obtain another expression of the self-affine measure through an infinite convolution of atomic measures
 $$
 \mu(R,B) = \delta_{R^{-1}B}\ast\delta_{R^{-2}B}\ast.... = w-\lim_{n\to\infty} (\delta_{R^{-1}B}\ast\delta_{R^{-2}B}\ast...\ast \delta_{R^{-n}B})
 $$
where w-$\lim$ is the weak limit of the probability measures.

\bigskip

 Given a subsequence of positive integers $\{n_k\}$, we define  ${\bf B}_{n_k}=B+R B+\cdots+ R^{n_{k}-1}B$. Letting $m_k=n_1+\cdots+n_k$.
Then the self-affine measure $\mu = \mu(R,B)$ can be factorized along this subsequence as
\begin{equation}\label{eq_factorize}
    \mu = \delta_{R^{-m_1}{\bf B}_{n_1}}\ast\delta_{R^{-m_2}{\bf B}_{n_2}}\ast\cdots.
\end{equation}
Define also
$$
\mu_{>{k}} = \delta_{R^{-m_{k+1}}{\bf B}_{n_{k+1}}}\ast\delta_{R^{-m_{k+2}}{\bf B}_{n_{k+2}}}\ast\cdots.
$$
The following lemma is  known, whose proof can be found in (\cite{LW2017},  Proposition 3.1).

\bigskip

\begin{lemma}\label{Hadamard-triple}
Suppose $(R, B, L)$ forms a Hadamard triple, for any $n\geq1$, \\
{\rm (i)} then $(R^n, {\bf B}_n, {\bf L}_n)$ is also a Hadamard triple;\\
{\rm (ii)} if $\widetilde{{\bf L}_n}\equiv {\bf L}_n \pmod{(R^t)^n}$, then $(R^n, {\bf B}_n, \widetilde{{\bf L}_n})$ is also a Hadamard triple.
\end{lemma}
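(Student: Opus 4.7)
The plan is to verify that $H_n := \frac{1}{\sqrt{q^n}}\bigl[e^{-2\pi i \langle R^{-n} b,\, l\rangle}\bigr]_{b \in {\bf B}_n,\, l \in {\bf L}_n}$ is unitary, after first confirming that both ${\bf B}_n$ and ${\bf L}_n$ have exactly $q^n$ elements. The Hadamard property of $(R,B,L)$ forces $B$ (resp.\ $L$) to be a set of distinct coset representatives modulo $R(\Z^d)$ (resp.\ $R^t(\Z^d)$): if $l\neq l'$ in $L$ satisfied $l\equiv l' \pmod{R^t(\Z^d)}$, then the $l$-th and $l'$-th columns of the original $H$ would coincide, contradicting unitarity. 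Iterating this, using that $R^t$ is injective on $\Z^d$, and reducing modulo $(R^t)^k(\Z^d)$ for $k=1,\ldots,n$, yields uniqueness of the representation $l = \sum_{j=0}^{n-1}(R^t)^j l_j$ with $l_j\in L$, so $\#{\bf L}_n=q^n$; the analogous argument handles ${\bf B}_n$.

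With uniqueness in hand, the masks factor as $\widehat{\delta_{{\bf B}_n}}(\xi) = \prod_{i=0}^{n-1} \widehat{\delta_B}((R^t)^i \xi)$. Orthogonality of the $l$-th and $l'$-th columns of $H_n$ is equivalent to $\widehat{\delta_{{\bf B}_n}}((R^t)^{-n}(l-l')) = 0$, so I would write $l-l' = \sum_j (R^t)^j(l_j - l'_j)$ and pick the smallest index $k$ with $l_k\neq l'_k$. Examining the factor in the product at $i = n-k-1$, and using $\Z^d$-periodicity of $\widehat{\delta_B}$ to discard the integer contributions coming from indices $j>k$, that factor reduces to $\widehat{\delta_B}((R^t)^{-1}(l_k-l'_k))$, which vanishes by the Hadamard condition on $(R,B,L)$. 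Since each column of $H_n$ has norm $1$ trivially, this establishes (i).

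For (ii), the key observation is that the matrix entry $e^{-2\pi i\langle R^{-n}b,\, l\rangle}$ depends on $l$ only modulo $(R^t)^n(\Z^d)$: if $\tilde l - l = (R^t)^n m$ with $m\in\Z^d$, then $\langle R^{-n} b,\, \tilde l - l\rangle = \langle b, m\rangle \in \Z$, and the exponential is $1$. The orthogonality step in (i) also implies that distinct elements of ${\bf L}_n$ lie in distinct cosets of $(R^t)^n(\Z^d)$ (otherwise the product formula would give $\widehat{\delta_{{\bf B}_n}}$ equal to $1$ at some nonzero difference). Hence replacing ${\bf L}_n$ by any $\widetilde{{\bf L}_n} \equiv {\bf L}_n \pmod{(R^t)^n}$ produces literally the same matrix $H_n$, which is therefore still unitary.

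The main obstacle is the bookkeeping in the orthogonality calculation: one has to select the correct index $i$ in the product so as to isolate the first differing coordinate of $l-l'$, and then carefully verify that the higher-$j$ contributions are killed by $\Z^d$-periodicity rather than interfering. Everything else reduces to exploiting the group-theoretic structure of residues modulo $R(\Z^d)$ and $R^t(\Z^d)$ and the invertibility of $R$ over $\R$.
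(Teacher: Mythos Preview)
Your argument is correct: the cardinality count via distinct residues, the mask factorization $\widehat{\delta_{{\bf B}_n}}(\xi)=\prod_{i=0}^{n-1}\widehat{\delta_B}((R^t)^i\xi)$, and the choice of the factor at $i=n-k-1$ together with $\Z^d$-periodicity give exactly the vanishing needed, and the invariance of the matrix entries under $l\mapsto l+(R^t)^n m$ handles (ii). The paper does not actually supply its own proof of this lemma; it simply cites \cite{LW2017}, Proposition~3.1, where the same direct verification is carried out. Your write-up is essentially that standard argument, so there is nothing to correct or to contrast.
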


\bigskip

We note that $(R,B,L)$ forms a Hadamard triple if and only if $\{e^{2\pi i \ell\cdot x}:\ell\in L\}$ will form an orthonormal basis for  $L^2(\delta_{R^{-1}B})$. Hence, since we know $\{(R^{n_k},{\bf B}_{n_k}, {\bf L}_{n_k})\}$ form Hadamard triples, we define
\begin{equation}\label{eqLambda_k}
\Lambda_k ={\bf L}_{n_1}+(R^t)^{m_1}{\bf L}_{n_2}+\cdots+(R^t)^{m_{k-1}}{\bf L}_{n_k}, \ \mbox{and} \ \Lambda = \bigcup_{k=1}^{\infty}\Lambda_k,
    \end{equation}
and $\Lambda$ forms a mutually orthogonal set for $L^2(\mu(R,B))$. 
The following is the main theorem  giving a sufficient condition for an orthgonal set to be complete \cite{DHL2019,LW2017}.

\medskip

\begin{theorem}[\cite{DHL2019, LW2017}]\label{theorem_LW}
Let $(R,B,L)$ be a Hadamard triple. Let $\Lambda_k$ and $\Lambda$ be defined as in (\ref{eqLambda_k}).
Suppose that
\begin{equation}\label{tailterm}
\delta(\Lambda): = \inf_{k\ge 1}\inf_{\lambda_k\in\Lambda_k} |\widehat{\mu}_{>k}(\lambda_k)|^2 >0.
\end{equation}
Then the self-affine measure $\mu$ is a spectral measure with a spectrum $\Lambda$ in $\Z^d$.
\end{theorem}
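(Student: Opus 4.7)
The plan is to invoke the Jorgensen--Pedersen criterion: since the exponentials $E(\Lambda)$ are mutually orthogonal in $L^2(\mu)$ (this orthogonality is already built into the nested construction \eqref{eqLambda_k} via the iterated Hadamard-triple identities of Lemma \ref{Hadamard-triple}), the function
$$Q_\Lambda(\xi):=\sum_{\lambda\in\Lambda}|\widehat{\mu}(\xi+\lambda)|^2$$
is at most $1$ everywhere by Bessel, and $E(\Lambda)$ is an orthonormal basis of $L^2(\mu)$ if and only if $Q_\Lambda\equiv 1$. So the entire task reduces to proving the reverse inequality $Q_\Lambda\ge 1$ pointwise.

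First I would compare $Q_\Lambda$ against partial sums obtained by truncating the factorization \eqref{eq_factorize}. Set $\mu_{\le k}:=\delta_{R^{-m_1}{\bf B}_{n_1}}\ast\cdots\ast\delta_{R^{-m_k}{\bf B}_{n_k}}$, so that $\mu=\mu_{\le k}\ast\mu_{>k}$ and $\widehat{\mu}=\widehat{\mu_{\le k}}\,\widehat{\mu_{>k}}$. A further iteration of Lemma \ref{Hadamard-triple} makes $(R^{m_k},{\bf B}_{m_k},\Lambda_k)$ a Hadamard triple, so that $E(\Lambda_k)$ is an orthonormal basis of the finite-dimensional space $L^2(\mu_{\le k})$. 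Applying Jorgensen--Pedersen to the atomic measure $\mu_{\le k}$ then gives the identity $\sum_{\lambda_k\in\Lambda_k}|\widehat{\mu_{\le k}}(\xi+\lambda_k)|^2=1$, and substituting $\widehat{\mu}=\widehat{\mu_{\le k}}\,\widehat{\mu_{>k}}$ yields the defect identity
\begin{equation}\label{defect}
1-\sum_{\lambda_k\in\Lambda_k}|\widehat{\mu}(\xi+\lambda_k)|^2=\sum_{\lambda_k\in\Lambda_k}|\widehat{\mu_{\le k}}(\xi+\lambda_k)|^2\bigl(1-|\widehat{\mu_{>k}}(\xi+\lambda_k)|^2\bigr).
\end{equation}
Since the standing assumption $0\in L$ forces $0\in {\bf L}_{n_j}$ for every $j$, the $\Lambda_k$ are nested with $\bigcup_k\Lambda_k=\Lambda$, so $Q_{\Lambda_k}\uparrow Q_\Lambda$ and the conclusion reduces to showing that the right-hand side of \eqref{defect} tends to $0$ as $k\to\infty$.

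This last step is the main obstacle and is exactly where the tail hypothesis $\delta(\Lambda)>0$ is indispensable. The difficulty is that $\delta(\Lambda)>0$ only gives a pointwise lower bound $|\widehat{\mu_{>k}}(\lambda_k)|^2\ge\delta(\Lambda)$ at the unshifted frequencies $\lambda_k$, whereas \eqref{defect} requires controlling $|\widehat{\mu_{>k}}(\xi+\lambda_k)|^2$ at arbitrary shifts $\xi$. My strategy is a two-scale regrouping: fix an intermediate level $k_0<k$ and use that $\widehat{\mu_{\le k_0}}$ is $(R^t)^{m_{k_0}}\Z^d$-periodic (because $\mu_{\le k_0}$ is supported in $R^{-m_{k_0}}\Z^d$) to partition $\Lambda_k$ into cosets indexed by $\Lambda_{k_0}$. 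On each coset the Jorgensen--Pedersen identity already established at level $k_0$ collapses the inner sum; combining this with the lower bound $|\widehat{\mu_{>k_0}}(\lambda_{k_0})|^2\ge\delta(\Lambda)$ and the fact that $\widehat{\mu_{>k}}\to 1$ pointwise (since $\mu_{>k}$ is supported on sets whose diameter is $O(\|R^{-1}\|^{m_k})\to 0$, so it converges weak-$*$ to $\delta_0$), I would extract a contraction estimate of the form
$$1-Q_\Lambda(\xi)\;\le\;\bigl(1-\delta(\Lambda)\bigr)\bigl(1-Q_{\Lambda_{k_0}}(\xi)\bigr).$$
Letting $k_0\to\infty$ and invoking $Q_{\Lambda_{k_0}}\uparrow Q_\Lambda$ produces the self-improving inequality $1-Q_\Lambda(\xi)\le(1-\delta(\Lambda))(1-Q_\Lambda(\xi))$, which, since $\delta(\Lambda)>0$, forces $Q_\Lambda(\xi)=1$ as required. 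The technical heart of the argument is the coset re-indexing and the careful bookkeeping showing that the cross-terms produced by the partition assemble into the stated contraction, as executed in \cite{DHL2019,LW2017}.
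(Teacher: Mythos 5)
First, note that the paper itself gives no proof of this statement: Theorem \ref{theorem_LW} is quoted verbatim from \cite{DHL2019, LW2017}, so there is no in-paper argument to compare against. Judged on its own terms, your set-up is correct and standard: the Jorgensen--Pedersen reduction to $Q_\Lambda\equiv 1$, the Bessel bound $Q_\Lambda\le 1$ from mutual orthogonality, the level-$k$ Parseval identity $\sum_{\lambda\in\Lambda_k}|\widehat{\mu_{\le k}}(\xi+\lambda)|^2=1$, the defect identity, and the nestedness $\Lambda_k\nearrow\Lambda$ from $0\in L$ are all right.

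The gap is at the only step where the hypothesis $\delta(\Lambda)>0$ is used, and it is a genuine one. Your claimed contraction $1-Q_\Lambda(\xi)\le(1-\delta(\Lambda))(1-Q_{\Lambda_{k_0}}(\xi))$ is asserted, not derived, and the ingredients you list do not produce it. Concretely, the coset regrouping (using the $(R^t)^{m_{k_0}}\Z^d$-periodicity of $\widehat{\mu_{\le k_0}}$) gives the exact identity
$$1-Q_\Lambda(\xi)=\sum_{\lambda\in\Lambda_{k_0}}|\widehat{\mu_{\le k_0}}(\xi+\lambda)|^2\bigl(1-Q^{(k_0)}((R^t)^{-m_{k_0}}(\xi+\lambda))\bigr),$$
where $Q^{(k_0)}$ is the Jorgensen--Pedersen sum of the tail system, while $1-Q_{\Lambda_{k_0}}(\xi)$ has the same form with $Q^{(k_0)}(\eta)$ replaced by $|\widehat{\mu}(\eta)|^2$. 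Your contraction would therefore follow from the termwise bound $Q^{(k_0)}(\eta)\ge\delta+(1-\delta)|\widehat{\mu}(\eta)|^2$; but the two facts actually available, namely $Q^{(k_0)}(\eta)\ge|\widehat{\mu}(\eta)|^2$ (the $\lambda'=0$ term) and $Q^{(k_0)}(\eta)\ge\delta$, only give the maximum of the two, which is strictly weaker (e.g.\ $|\widehat{\mu}(\eta)|^2=0.9$, $\delta=0.5$ requires $0.95$ but yields only $0.9$). The contraction is of course true \emph{a posteriori} (once $Q_\Lambda\equiv1$ both sides vanish), which is exactly why assuming it is circular. A second problem: you invoke the pointwise convergence $\widehat{\mu_{>k}}\to1$, but in the defect identity the arguments are $\xi+\lambda_k$ with $\lambda_k$ running over the unbounded set $\Lambda_k$, so pointwise convergence at fixed points controls nothing here. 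The mechanism that actually works is to write $\widehat{\mu_{>k}}(\xi+\lambda_k)=\widehat{\mu}((R^t)^{-m_k}\xi+(R^t)^{-m_k}\lambda_k)$ and use the \emph{uniform} continuity of $\widehat{\mu}$ to compare with $|\widehat{\mu_{>k}}(\lambda_k)|^2\ge\delta(\Lambda)$, which yields $Q_\Lambda(\xi)\ge\delta(\Lambda)-o(1)$, hence $Q_\Lambda\ge\delta(\Lambda)$ everywhere; one then upgrades this lower frame-type bound on the orthonormal system $E(\Lambda)$ to completeness. Since you ultimately defer "the technical heart" to \cite{DHL2019,LW2017}, the proposal does not constitute a proof of the decisive implication.
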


\medskip

Condition (\ref{tailterm}) is a sufficient condition guaranteeing the mutually orthogonal sets to be complete. This condition was first proposed by Strichartz \cite{S1998,S2000}. In general, it cannot be removed. On the other hand, this condition is also not necessary \cite{DHL2013}. The following theorem provides a strengthened result of Proposition \ref{key-lemma} in the case of self-affine measures.  

\medskip

\begin{theorem}\label{th1.3}
Suppose a self-affine measure $\mu:=\mu(R, B)$ satisfies ${\mathcal Z}(\mu)=\emptyset$. Then there is a $\epsilon_0>0$ such that for any $\xi\in [0, 1]^d$,
$$
{\mathcal K}_{\xi, \epsilon_0} = \{k\in{\mathbb Z}^d: |\widehat{\mu}(\xi+k)| \geq \epsilon_0 \}
$$
is an infinite set.
\end{theorem}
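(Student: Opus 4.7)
The plan is to combine the uniform neighborhood lower bound of Proposition \ref{equi-positive} with the self-affine scaling identity for $\widehat{\mu}$ to manufacture, for every $\xi\in[0,1]^d$, an infinite family of integer translates on which $|\widehat{\mu}|$ stays bounded below by one universal constant. First, since $\mu=\mu(R,B)$ is singular and $\mathcal{Z}(\mu)=\emptyset$, Proposition \ref{key-lemma} applied at the origin yields that $\{k\in\Z^d:\widehat{\mu}(k)\neq 0\}$ is infinite; fix one $p_0\in\Z^d\setminus\{0\}$ with $\alpha:=|\widehat{\mu}(p_0)|>0$. Second, Proposition \ref{equi-positive} provides constants $\epsilon,\delta>0$ and (by inspection of its proof) a finite set $K\subset\Z^d$ such that every $\xi\in[0,1]^d$ admits $k_\xi\in K$ with $|\widehat{\mu}(\xi+y+k_\xi)|\geq\epsilon$ whenever $|y|<\delta$; thus $M:=\sup_{\xi\in[0,1]^d}|\xi+k_\xi|<\infty$.

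The engine is the identity
\[
\widehat{\mu}\bigl(\xi'+(R^t)^n p\bigr)\;=\;\widehat{\mu}\bigl((R^t)^{-n}\xi'+p\bigr)\prod_{j=1}^{n}\widehat{\delta_B}\bigl((R^t)^{-j}\xi'\bigr),\qquad\xi'\in\R^d,\ p\in\Z^d,\ n\geq 1,
\]
which comes from iterating the scaling relation $\widehat{\mu}(\xi')=\widehat{\delta_B}(R^{-t}\xi')\widehat{\mu}(R^{-t}\xi')$ and absorbing the integer shifts $(R^t)^j p$ via the $\Z^d$-periodicity of $\widehat{\delta_B}$ (which holds because $B\subset\Z^d$). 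I apply this with $\xi':=\xi+k_\xi$ and $p:=p_0$. Since $R$ is expansive and $|\xi'|\le M$, the vectors $(R^t)^{-n}\xi'$ tend to $0$ \emph{uniformly} in $\xi\in[0,1]^d$, so $\widehat{\mu}((R^t)^{-n}\xi'+p_0)\to\widehat{\mu}(p_0)$ uniformly, giving modulus $\ge\alpha/2$ for all large $n$. Rewriting the product as $\widehat{\mu}(\xi')/\widehat{\mu}((R^t)^{-n}\xi')$—valid for $n$ large, since the denominator is bounded below by $\epsilon$ by Proposition \ref{equi-positive} applied at $0$ (where $k_0=0$)—and using $|\widehat{\mu}|\le 1$, its modulus is $\ge|\widehat{\mu}(\xi')|\ge\epsilon$. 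Hence there is $N_0$ independent of $\xi$ such that for every $n\ge N_0$,
\[
\bigl|\widehat{\mu}\bigl(\xi+k_\xi+(R^t)^n p_0\bigr)\bigr|\;\ge\;\tfrac{\alpha\epsilon}{2}\;=:\;\epsilon_0.
\]

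The integers $k_n(\xi):=k_\xi+(R^t)^n p_0\in\Z^d$, $n\ge N_0$, take infinitely many distinct values because $R$ is expansive and $p_0\ne 0$, so $|(R^t)^n p_0|\to\infty$; hence $\mathcal{K}_{\xi,\epsilon_0}\supseteq\{k_n(\xi):n\ge N_0\}$ is infinite, for every $\xi\in[0,1]^d$. The principal obstacle is ensuring that $N_0$ can be chosen \emph{uniformly in $\xi$}; this relies on the fact—implicit in the proof of Proposition \ref{equi-positive}—that the map $\xi\mapsto k_\xi$ may be taken with values in a single finite set, so that $|\xi+k_\xi|$ is uniformly bounded. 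Without this boundedness the convergence $(R^t)^{-n}(\xi+k_\xi)\to 0$ would hold at a $\xi$-dependent rate, and both limits underlying the estimate would lose their uniformity.
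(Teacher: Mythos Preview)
Your proof is correct and follows essentially the same approach as the paper: both fix a nonzero integer $p_0$ (the paper's $t_0$) with $\widehat{\mu}(p_0)\ne 0$, then use the self-affine product formula and $\Z^d$-periodicity of $\widehat{\delta_B}$ to bound $|\widehat{\mu}(\xi+k_\xi+(R^t)^n p_0)|$ below by the product of $|\widehat{\mu}(\xi+k_\xi)|\ge\epsilon$ and a lower bound on $|\widehat{\mu}|$ near $p_0$. The only differences are cosmetic: the paper allows the threshold $n_0$ to depend on $\xi$ (which suffices, since the theorem only asks each $\mathcal{K}_{\xi,\epsilon_0}$ to be infinite), whereas you go slightly further and secure a uniform $N_0$ via the finiteness of the set of $k_\xi$'s; and your division argument for the partial product can be replaced by the simpler remark that each factor $|\widehat{\delta_B}(\cdot)|\le 1$, so the finite product is automatically $\ge|\widehat{\mu}(\xi')|$.
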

\begin{proof}
Take $\epsilon>0$ and $\delta>0$ be the constants defined as in Proposition \ref{equi-positive}. For $\xi=0$,  from Proposition \ref{key-lemma}, ${\mathcal K_0}$ is an infinite set, we can choose a $t_0\in{\mathcal K_0}\setminus\{0\}$. Then
there exists $\epsilon'>0$ and $\delta'>0$ such that
\begin{equation}\label{eq2.2}
    |\widehat{\mu}(y+t_0)|\geq\epsilon', \quad \forall\ |y|<\delta'.
\end{equation}
Let $\epsilon_0=\epsilon\cdot\epsilon'$ and $\delta_0=\min\{\delta, \delta'\}$ which are positive constants independent of $\xi\in [0,1]^d$.

\bigskip

From Proposition \ref{equi-positive}, for any $\xi\in [0,1]^d$, there is a $k_{\xi}\in{\mathbb Z}^d$ such that
\begin{equation}\label{eq2.1}
    |\widehat{\mu}(\xi+k_{\xi})| \geq \epsilon.
\end{equation}
Fix $\xi\in [0,1]^d$ and $k_{\xi}$, we can find an integer $n_0\geq1$ such that $|(R^t)^{-n_0}(\xi+k_{\xi})|< \delta_0$. When $n\geq n_0$, since $|(R^t)^{-n}(\xi+k_{\xi})|< \delta_0$, the inequality \eqref{eq2.2} implies that
\begin{equation*}
    |\widehat{\mu}((R^t)^{-n}(\xi+k_{\xi})+t_0)|\geq \epsilon'.
\end{equation*}
It together with inequality \eqref{eq2.1}, we have
\begin{eqnarray*}
|\widehat{\mu}(\xi+k_{\xi}+(R^t)^{n}t_0)|&=&\prod_{k=1}^{\infty}|M_{B}((R^t)^{-k}(\xi+k_{\xi}+(R^t)^nt_0))|\\
&=&\prod_{k=1}^n|M_{B}((R^t)^{-k}(\xi+k_{\xi}))|\cdot\prod_{k=1}^{\infty}|M_B((R^t)^{-k}((R^t)^{-n}(\xi+k_{\xi})+t_0))|\\
&\geq&|\widehat{\mu}(\xi+k_{\xi})|\cdot|\widehat{\mu}((R^t)^{-n}(\xi+k_{\xi})+t_0)|\\
&\geq&\epsilon\cdot\epsilon'=\epsilon_0.
\end{eqnarray*}
The above inequality implies that
$$\{k_{\xi}+(R^t)^n t_0\}_{n=n_0}^{\infty}\subset {\mathcal K}_{\xi, \epsilon_0}.$$
As $(R^t)^k$ is an expanding matrix, $1$ is not an eigenvalue of $(R^t)^k$. So $(R^t)^nt_0\neq (R^t)^mt_0$ when $n\neq m$. Hence ${\mathcal K}_{\xi, \epsilon_0}$ is an infinite set.
\end{proof}

\medskip

Next we prove  Theorem \ref{main-thm2}.

\medskip

\begin{proof} ({\bf Proof of Theorem \ref{main-thm2}}). Let $\epsilon_0>0$ be a constant defined as in Theorem \ref{th1.3}. Then the uniform continuity of $\widehat{\mu}$ implies that there exists a $\delta_0>0$ such that for all $\xi\in [0, 1]^d$ and $|y|<\delta_0$, we have
\begin{equation}\label{eq3.1}
    |\widehat{\mu}(\xi+y+k)|\geq\epsilon_0/2
\end{equation}
holds for $k\in{\mathcal K}_{\xi, \epsilon_0}$.

\bigskip

We now  construct inductively $\Lambda_k$ as in (\ref{eqLambda_k}) so that $\Lambda_k$ is $b-$lacunary and $\delta(\Lambda)\geq\epsilon_0/2>0$. From Lemma \ref{Hadamard-triple} (ii), without loss of generality, we assume $L=\{l_0=0, l_1, \cdots, l_{q-1}\}\subset R^t[0, 1)^d\cap\Z^d$. Then
$$(R^t)^{-n}{\bf L}_n\subset[0, 1)^d.$$
Denote $n_1=1$. Let
$$\lambda_1=l_1+(R^t) k_{(R^t)^{-1}l_1} \quad k_{(R^t)^{-1}l_1}\in {\mathcal K}_{(R^t)^{-1}l_1, \epsilon_0}.$$
For $2\le i\le q-1$, since ${\mathcal K}_{(R^t)^{-1}l_i, \epsilon_0}$ is infinite, we can take a $k_{(R^t)^{-1}l_i}\in{\mathcal K}_{(R^t)^{-1}l_i, \epsilon_0} $ such that
$$\lambda_i=l_i+R^t k_{(R^t)^{-1}l_i} \text{ and } {|\lambda_i|}\geq b{|\lambda_{i-1}|}.$$
Then $\Lambda_1=\{\lambda_0=0, \lambda_1, \lambda_2, \cdots, \lambda_{q-1}\}$ is $b$-lacunary and from \eqref{eq3.1}
$$|\widehat{\mu}_{>1}(\lambda_i)|=|\widehat{\mu}((R^t)^{-1}l_i+k_{(R^t)^{-1}l_i})|\geq \epsilon_0/2, \quad\ 1\le i\le q-1 .$$
Suppose that $\Lambda_{k-1}$ has been constructed which is a $b-$lacunary set
and
$$\inf_{\lambda_{k-1}\in\Lambda_{k-1}}|\widehat{\mu}_{>(k-1)}(\lambda_{k-1})|\geq\epsilon_0/2.$$
We can take a large enough $n_k$ in the subsequence with the following happen:
$$\sup_{\lambda_{k-1}\in\Lambda_{k-1}}\|(R^t)^{-m_k}\lambda_{k-1}\|<\delta_0.$$
(Recall that $m_k = n_1+...+n_k$) We now define
$$\Lambda_k=\Lambda_{k-1}+\{(R^t)^{m_{k-1}}l_{k}+(R^t)^{m_k}k_{x_{l_k}}: l_{k}\in {\bf L}_{n_k}\}$$
where $x_{l_k}=(A^t)^{-n_k}l_k\in[0, 1)^d$ and $k_0=0$. Then $\Lambda_{k-1}\subset\Lambda_k$. As  ${\mathcal K}_{x_l, \epsilon_0}$ is an infinite set, by choosing $k_{x_l}\in{\mathcal K}_{x_l, \epsilon_0}$ as large as we wanted, we can ensure that 
$\Lambda_{k}$ is $b$-lacunary. Now writing
$$\lambda_k=\lambda_{k-1}+(R^t)^{m_{k-1}}l_k+(R^t)^{m_k}k_{x_{l_k}},$$
for some $\lambda_{k-1}\in\Lambda_{k-1}$, we have
\begin{eqnarray*}
|\widehat{\mu_{>k}}(\lambda_k)|&=&|\widehat{\mu}((R^t)^{-m_k}\lambda_k)|\\
&=&|\widehat{\mu}((R^t)^{-m_k}\lambda_{k-1}+x_{l_k}+k_{x_{l_k}})| \ \ (\text{using} \  \eqref{eq3.1})\\
&\geq& \epsilon_0/2>0.
\end{eqnarray*}
Hence, $\delta(\Lambda)>0$ is now satisfied. The $b$-lacunary set $\Lambda=\bigcup_{k=1}^{\infty}\Lambda_k$ is a spectrum of $\mu$ according to Theorem \ref{theorem_LW}.
\end{proof}






\bigskip

As a corollary, we settle the case for the self-similar measure on $\R^1$.

\begin{corollary}\label{one-dimension}
 Let $R>1$ be an integer and $B\subset\Z$ be a digit set with $\# B< R$ and gcd$(B) = 1$. Suppose that $(R, B, L)$ forms a Hadamard triple. Then ${\mathcal Z}(\mu(R, B))=\emptyset$ and $\mu(R, B)$ admits a spectrum $\Lambda$ with $\dim^+(\Lambda)=0$.
\end{corollary}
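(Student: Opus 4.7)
The plan is to establish the two assertions of the corollary in sequence.

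For the first assertion, $\mathcal{Z}(\mu(R,B))=\emptyset$, I argue by contradiction. Suppose some $\xi_0\in[0,1)$ satisfies $\widehat{\mu}(\xi_0+k)=0$ for every $k\in\mathbb{Z}$. Using the product decomposition $\widehat{\mu}(\xi)=\prod_{n\ge 1} M_B(\xi/R^n)$, each vanishing forces $(\xi_0+k)/R^{n_k}\equiv z_k \pmod 1$ for some $n_k\geq 1$ and some $z_k$ in the finite zero set $\mathcal{Z}_1\subset[0,1)$ of $M_B$. Reducing modulo $1$ yields the $k$-independent compatibility $R^{n_k}z_k\equiv\xi_0\pmod 1$, so every translate $\xi_0+k$ lies in an arithmetic progression of the form $R^n z + R^n\mathbb{Z}$ with $z\in\mathcal{Z}_1$. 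These APs must cover the density-one set $\xi_0+\mathbb{Z}$, which gives the density estimate
\[
1 \;\leq\; \sum_{n\geq 1}\frac{\#\mathcal{Z}_1}{R^n} \;=\; \frac{\#\mathcal{Z}_1}{R-1}.
\]
Since $(R,B,L)$ is a Hadamard triple, $B$ is a distinct-residue system modulo $R$; after translation $B\subset\{0,1,\ldots,R-1\}$, so $M_B$ has degree at most $R-1$ as a polynomial in $w=e^{-2\pi i\xi}$, giving $\#\mathcal{Z}_1\leq R-1$. If $\#\mathcal{Z}_1<R-1$ the estimate is already contradictory. In the borderline case $\#\mathcal{Z}_1=R-1$, equality in the density sum forces every $z\in\mathcal{Z}_1$ to satisfy $R^n z\equiv\xi_0\pmod 1$ for all $n\geq 1$; writing $P(w)=\sum_{b\in B}w^b$, this identifies $P$ up to a scalar with a divisor of $w^R-\alpha^R$ on the unit circle, and the $\{0,1\}$-coefficient constraint on $P$ (inherited from $B$ being a set of digits) pins it down to $P(w)=1+w+\cdots+w^{R-1}$. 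That is, $B=\{0,1,\ldots,R-1\}$, contradicting $\#B<R$.

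For the second assertion, once $\mathcal{Z}(\mu)=\emptyset$ is in hand, Theorem~\ref{main-thm2} produces, for any $b>1$, a $b$-lacunary spectrum $\Lambda$ of $\mu(R,B)$. Proposition~\ref{no-dim-of-lacu} then immediately gives $\dim^+(\Lambda)=0$.

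The main obstacle is the borderline case $\#\mathcal{Z}_1=R-1$ of the density estimate; handling it requires the algebraic identification of the roots of $P(w)$ on the unit circle together with the $\{0,1\}$-coefficient structure of $P$, which combined with $\#B<R$ closes the argument. The hypothesis $\gcd(B)=1$ enters to exclude reducible degenerate situations in which $M_B(\xi)=M_{B/d}(d\xi)$ would effectively rescale the system and require a separate treatment.
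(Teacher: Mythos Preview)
Your reduction of the second assertion to the first via Theorem~\ref{main-thm2} and Proposition~\ref{no-dim-of-lacu} is correct and is exactly what the paper does. The problem lies in your direct argument for $\mathcal{Z}(\mu)=\emptyset$.

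The density half of the counting is fine: if $\xi_0+\mathbb{Z}$ is covered by the progressions $R^n z+R^n\mathbb{Z}$ with $z\in\mathcal{Z}_1$, one does get $\#\mathcal{Z}_1\ge R-1$. What fails is the matching upper bound $\#\mathcal{Z}_1\le R-1$. You justify it by claiming that ``after translation $B\subset\{0,1,\ldots,R-1\}$'', but the Hadamard triple condition only forces the elements of $B$ to be \emph{distinct modulo} $R$; it says nothing about the diameter of $B$. A concrete instance: $R=8$, $B=\{0,1,4,13\}$, $L=\{0,1,4,5\}$. Here $B\equiv\{0,1,4,5\}\pmod 8$, so $(R,B,L)$ is a Hadamard triple, $\gcd(B)=1$, and $\#B=4<8$; yet $\max B-\min B=13$, so no translate of $B$ fits in $\{0,\ldots,7\}$, and $P(w)=1+w+w^4+w^{13}$ has degree $13$. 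Your bound on the number of unit-circle zeros of $P$ therefore collapses, and with it the contradiction.

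The borderline analysis is also incomplete. Equality in a subadditive density estimate does not force $R^n z\equiv\xi_0\pmod 1$ for \emph{every} pair $(n,z)$ (the progressions may overlap), and the jump from ``the roots lie among those of $w^R-\alpha^R$'' to ``$P(w)=1+w+\cdots+w^{R-1}$'' is not argued. Finally, the hypothesis $\gcd(B)=1$ never enters the body of your proof; your last sentence acknowledges this but does not repair it, and in the actual proof this is precisely the hypothesis doing the work.

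The paper does not attempt a self-contained proof of $\mathcal{Z}(\mu)=\emptyset$ here; it simply invokes \cite[Section~5]{DHL2019}, where the result is obtained from the lattice structure $\mathbb{Z}[R,B]=\mathbb{Z}$ (equivalent to $\gcd(B)=1$ in dimension one) rather than from a root count for $M_B$. You should either cite that result, or replace the degree-bound step by an argument that genuinely uses $\gcd(B)=1$ and does not presuppose $B\subset\{0,\ldots,R-1\}$.
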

\begin{proof} It has been proved in \cite[Section 5]{DHL2019}  that if gcd$(B)$=1, then the periodic zero set of  $\mu(R, B)$ is empty and therefore it has a $b-$lacunary  spectrum in $\Z$ by Theorem \ref{main-thm2}, which has Beurling dimension zero by Proposition \ref{no-dim-of-lacu}. 
\end{proof}

\bigskip

\section{Proof of Theorem \ref{main-thm}.}

\bigskip

We first discuss some preliminary reduction that we can perform in order to prove our main theorem.

\bigskip

\begin{definition}
Let $R_1, R_2$ be $d\times d$ integer matrices, and the finite sets $B_1, B_2, L_1, L_2$ be in ${\mathbb Z}^d$. We say that two triples $(R_1, B_1, L_1)$ and $(R_2, B_2, L_2)$ are {\it conjugate} (through the matrix $M$) if there exists an integer unimodular  matrix $M$  such that $R_2=MR_1M^{-1}, B_2=MB_1$ and $L_2=(M^t)^{-1}L_1$. (Here, unimodular matrix means its determinant is 1).
\end{definition}

\bigskip

The following proposition is obtained from some simple computations.

\bigskip

\begin{proposition}\label{mu_1-mu_2}
Suppose that $(R_1, B_1, L_1)$ and $(R_2, B_2, L_2)$ are two conjugate triples, through the matrix $M$. Then

{\rm(i)} If $(R_1, B_1, L_1)$ is a Hadamard triple then so is $(R_2, B_2, L_2)$.

{\rm(ii)} The measure $\mu(R_1, B_1)$ is spectral with spectrum $\Lambda$ if and only if $\mu(R_2, B_2)$ is spectral with spectrum $(M^t)^{-1}\Lambda$.

{\rm(iii)} Spectral measures $\mu(R_1, B_1)$ and $\mu(R_2, B_2)$ have spectrum $\Lambda$ with $\dim^+(\Lambda)=0$ simultaneously.
\end{proposition}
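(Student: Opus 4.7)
The proposition is essentially a ``change of coordinates by an integer unimodular matrix'' statement, so my plan is to treat the three parts in order and extract everything from the identities
\[
R_2^{-1} = M R_1^{-1} M^{-1}, \qquad B_2 = MB_1, \qquad L_2 = (M^t)^{-1}L_1,
\]
together with the fact that $M$ unimodular means both $M$ and $(M^t)^{-1}$ preserve $\Z^d$.

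\textbf{Part (i).} I will just compute the Hadamard matrix of $(R_2,B_2,L_2)$ entry by entry. For $b_1\in B_1$, $l_1\in L_1$, set $b_2 = Mb_1$ and $l_2 = (M^t)^{-1}l_1$. Then
\[
\langle R_2^{-1}b_2,\, l_2\rangle = \langle MR_1^{-1}M^{-1}\cdot Mb_1,\, (M^t)^{-1}l_1\rangle = \langle R_1^{-1}b_1,\, M^t(M^t)^{-1}l_1\rangle = \langle R_1^{-1}b_1,\, l_1\rangle,
\]
so the matrix entries of the two Hadamard candidates agree, hence one is unitary iff the other is. (Unimodularity of $M$ ensures $B_2,L_2 \subset \Z^d$.)

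\textbf{Part (ii).} The key step is to show that the pushforward $M_*\mu(R_1,B_1) = \mu(R_2,B_2)$. Writing $\tau_b^{(i)}(x) = R_i^{-1}(x+b)$ for the IFS maps, a direct check using $R_2^{-1}M = MR_1^{-1}$ gives
\[
M\circ \tau_b^{(1)} = \tau_{Mb}^{(2)} \circ M \qquad (b \in B_1),
\]
so $M_*\mu(R_1,B_1)$ satisfies the invariance identity defining $\mu(R_2,B_2)$, and by uniqueness of the invariant measure the two coincide. Given this, for any $f \in L^2(\mu_2)$,
\[
\int f(x)\,e^{-2\pi i \lambda'\cdot x}\,d\mu_2(x) = \int f(Mx)\,e^{-2\pi i (M^t\lambda')\cdot x}\,d\mu_1(x),
\]
together with the isometry $L^2(\mu_2)\to L^2(\mu_1)$, $f\mapsto f\circ M$, shows that $E(\Lambda)$ is an orthonormal basis of $L^2(\mu_1)$ if and only if $E((M^t)^{-1}\Lambda)$ is one of $L^2(\mu_2)$.

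\textbf{Part (iii).} This is immediate from Proposition \ref{conjugate}: since $(M^t)^{-1}$ is an invertible matrix in $M_d(\R)$,
\[
\dim^+(\Lambda) = \dim^+\!\bigl((M^t)^{-1}\Lambda\bigr),
\]
so $\mu_1$ has a zero-Beurling-dimension spectrum precisely when $\mu_2$ does. There is no real obstacle here; the only place requiring a moment of care is verifying the invariance identity in part (ii) and noting that unimodularity is what keeps $B_2$ and $L_2$ inside $\Z^d$.
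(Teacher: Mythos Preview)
Your proof is correct and follows the same approach as the paper. The paper simply cites \cite{DJ2007}, Proposition 3.4 for (i) and (ii) and invokes Proposition \ref{conjugate} for (iii); you have written out explicitly the computations that the cited reference contains, and your part (iii) is identical to the paper's.
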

\begin{proof}
The proof of {\rm(i), \rm(ii)} can be found in e.g. (\cite{DJ2007}, Proposition 3.4). The {\rm(iii)} follows from the fact that $\dim^+(\Lambda)=\dim^+((M^t)^{-1}\Lambda)$ which is proved in Proposition \ref{conjugate}

\end{proof}

\bigskip

 We use $E\times F$ to denote the Cartesian product of $E$ and $F$ so that $E\times F=\{(e, f)^t: e\in E, f\in F\}$. We first introduce the following notations.

\begin{definition}
For a vector $x\in\R^d$, we write it as $x=(x_1, x_2)^t$ with $x_1\in\R^r$ and $x_2\in\R^{d-r}$. We denote by $\pi_1(x)=x_1, \pi_2(x)=x_2$. For a subset $E$ of $\R^d$, and $x_1\in\R^r, x_2\in\R^{d-r}$, we denote by
$$E_2(x_1):=\{y\in\R^{d-r}: (x_1, y)^t\in E\}, E_1(x_2):=\{x\in\R^r: (x, x_2)^t\in E\}.$$
\end{definition}

\bigskip

We define ${\mathbb Z}[R, B]$ to be the smallest $R-$invariant lattice containing all $\sum_{j=0}^{n-1}R^jB$. To prove theorem \ref{main-thm},  there is no loss of generality to assume that ${\mathbb Z}[R, B]={\mathbb Z}^d$ since we can always conjugate the Hadamard triple to produce a Hadamard triple with ${\mathbb Z}[R, B]={\mathbb Z}^d$. If $(R, B)$ and $(\widetilde{R}, \widetilde{B})$ are conjugate through an integer unimodular matrix $M$, then 
$$\Z[\widetilde{R}, \widetilde{B}]=M\Z[R, B]=M\Z^d=\Z^d.$$

\medskip

By studying the dynamical system underlying the self-affine system, the following decomposition was proved in \cite[Section 6 and 7]{DHL2019}.  If $(R, B, L)$ is a Hadamard triple  such that $\Z[R,B] = \Z^d$ and ${\mathcal Z}(\mu)\neq\emptyset$, we can always conjugate with some integer unimodular matrix so that $(R, B)$ are of the following {\it quasi-product form}:
\begin{equation}\label{R-quasi-product}
    R=\left(
  \begin{array}{cc}
   R_1  &  0 \\
    C_0 & G_1 \\
  \end{array}
\right)
\end{equation}

\begin{equation}\label{B-quasi-product}
    B=\{(u_i, d_{j}(u_i))^t: 1\le i\le N_1, 1\le j\le N_2:=|\det G_1|\},
\end{equation}
and $\{d_{j}(u_i): 1\le j\le N_2\}$ is a complete set of representatives $\pmod{G_1\Z^{d-r}}$.
Note that $R^{-k}$ can be written in the following form
$$R^{-k}=\left(
  \begin{array}{cc}
   R_1^{-k}  &  0 \\
    C_k   &G_1^{-k} \\
  \end{array}
\right)$$
for some $C_k\in M_{d-r, r}(\R)$. For the self-affine set $T(R, B)$, we can express it as a set of infinite sums
$$T(R, B)=\left\{\sum_{k=1}^{\infty}R^{-k}b_k: b_k\in B\right\}.$$
Therefor any element $(x_1, x_2)^t\in T(R, B)$ can be written as
$$x_1=\sum_{k=1}^{\infty}R_1^{-k}u_{i_k},\ x_2=\sum_{k=1}^{\infty}C_ku_{i_k}+\sum_{k=1}^{\infty}G_1^{-k}d_{j_k}(u_{i_k}).$$
Hence
$$\pi_1(T(R, B))=T(R_1, \pi_1(B))$$
is a self-affine set where $\pi_1(B)=\{u_i: 1\le i\le N_1\}$. For each
$$x_1=\sum_{k=1}^{\infty}R_1^{-k}u_{i_k}\in T(R_1, \pi_1(B)),$$
we have
$$\left(T(R, B)\right)_2(x_1)=\sum_{k=1}^{\infty}C_ku_{i_k}+\left\{\sum_{k=1}^{\infty}G_1^{-k}d_{j_k}(u_{i_k}): 1\le j_k\le N_2\right\}$$
almost surely tiles $\R^{d-r}$.
Moreover, we can find a $L'\equiv L\pmod{R^t}$ such that  $(R_1, \pi_1(B), L_1'(l_2))$ is a Hadamard triple on $\R^r$ for all $l_2\in\pi_2(L')$.
Let $\mu(R_1, \pi_1(B))$ be the self-affine measure supported on $T(R_1, \pi_1(B))$ and $\mu_2^{(x_1)}$ be  the infinite convolution product 
$$
\delta_{G_1^{-1}B_2(i_1)}\ast\delta_{G_1^{-2}B_2(i_2)}\ast\dots,$$
where  $B_2(i_k):=\{d_{j}(u_{i_k}) : 1\leq j\leq N_2\}$
We call $\mu_2^{(x_1)}$ the {\it Cantor-Moran measure} supported on $\left(T(R, B)\right)_2(x_1) - \sum_{k=1}^{\infty}C_ku_{i_k}$. the measures $\mu(R_1,\pi_1(B))$ and  $\mu_2^{(x_1)}$ are called a {\it quasi-product form decomposition} of $\mu(R, B)$.

\bigskip

\begin{theorem}\label{quasi-prod-form-decomp}
 Let $(R, B, L)$ be a Hadamard triple on $\R^d$ such that $\Z[R,B] = \Z^d$ and ${\mathcal Z}(\mu)\neq\emptyset$. Then we can conjugate with an integer unimodular matrix $M$ so that $\mu(MRM^{-1}, MB)$ has a quasi-product form $\mu_1$ and $\mu_2^{(x_1)}$ on $\R^r\times\R^{d-r}$ with ${\mathcal Z}(\mu_1)=\emptyset$.
\end{theorem}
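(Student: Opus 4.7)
The plan is to prove Theorem~\ref{quasi-prod-form-decomp} by induction on the dimension $d$, using the single-step quasi-product decomposition from \cite{DHL2019} as the engine and iterating it until the top factor has empty periodic zero set.

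For the base case $d=1$, the hypothesis $\mathcal{Z}(\mu)\neq\emptyset$ is vacuous: as invoked in Corollary~\ref{one-dimension} (and proved in \cite[Section~5]{DHL2019}), any $1$-dimensional Hadamard triple with $\mathbb{Z}[R,B]=\mathbb{Z}$ automatically satisfies $\mathcal{Z}(\mu)=\emptyset$, so there is nothing to do. For the inductive step, first invoke \cite[Sections~6--7]{DHL2019} to produce an integer unimodular $M_0$ such that $(M_0RM_0^{-1},M_0B)$ takes the quasi-product form \eqref{R-quasi-product}--\eqref{B-quasi-product} for some $1\leq r<d$, with associated top-factor measure $\mu_1^{(0)}=\mu(R_1,\pi_1(B))$ on $\mathbb{R}^r$. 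If $\mathcal{Z}(\mu_1^{(0)})=\emptyset$, we are finished with $M=M_0$.

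Otherwise, the pair $(R_1,\pi_1(B))$ together with the Hadamard partner $L_1'(l_2)$ (which exists for each $l_2\in\pi_2(L')$ by the results recalled in Section~4) is a Hadamard triple on $\mathbb{R}^r$. After a harmless preliminary unimodular conjugation on $\mathbb{R}^r$ we may assume $\mathbb{Z}[R_1,\pi_1(B)]=\mathbb{Z}^r$. Since $r<d$ and $\mathcal{Z}(\mu_1^{(0)})\neq\emptyset$, the inductive hypothesis applies: there is an integer unimodular $M_1\in GL_r(\mathbb{Z})$ so that $\mu(M_1R_1M_1^{-1},M_1\pi_1(B))$ admits a quasi-product decomposition on $\mathbb{R}^{r_1}\times\mathbb{R}^{r-r_1}$ (with $r_1<r$) whose top factor $\mu_1^{(1)}$ satisfies $\mathcal{Z}(\mu_1^{(1)})=\emptyset$. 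Lift $M_1$ to the block-diagonal matrix $\widetilde{M}_1=\operatorname{diag}(M_1,I_{d-r})\in GL_d(\mathbb{Z})$ and set $M=\widetilde{M}_1M_0$.

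The heart of the argument is to verify that conjugating by $M$ reassembles $(R,B)$ into a \emph{single} quasi-product form with block sizes $r_1$ and $d-r_1$, and that the top factor in this reassembly is precisely $\mu_1^{(1)}$. Because $\widetilde{M}_1$ is block diagonal and acts as the identity on the last $d-r$ coordinates, the matrix $\widetilde{M}_1RM_0^{-1}\widetilde{M}_1^{-1}$ retains its block-lower-triangular shape, with $(1,1)$-block $M_1R_1M_1^{-1}$, $(2,2)$-block $G_1$, and new $(2,1)$-block $C_0M_1^{-1}$, while the digit set becomes $\{(M_1u_i,d_j(u_i))^t\}$. Substituting the inner quasi-product expansion $M_1R_1M_1^{-1}=\begin{pmatrix}R_{1,1}&0\\C_0'&G_1'\end{pmatrix}$ and $M_1u_i=(v_{k(i)},e_{\ell(i)}(v_{k(i)}))^t$ into the top $r\times r$ block then exhibits $MRM^{-1}$ and $MB$ in quasi-product form relative to the coarser splitting $r_1+(d-r_1)$, with new bottom block $G_1^{\mathrm{new}}=\begin{pmatrix}G_1'&0\\ *&G_1\end{pmatrix}$ of determinant $|\det G_1'|\cdot|\det G_1|$. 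The count of second-coordinate digits for each fixed first coordinate $v_k$ is exactly $|\det G_1'|\cdot|\det G_1|=|\det G_1^{\mathrm{new}}|$, and the two successive complete-residue-system properties compose to give a complete residue system modulo $G_1^{\mathrm{new}}\mathbb{Z}^{d-r_1}$. With the decomposition in hand, the top factor is $\mu(R_{1,1},\{v_k\})=\mu_1^{(1)}$, which has empty periodic zero set by the inductive hypothesis. The main obstacle is precisely this bookkeeping: checking that two successive quasi-product factorizations compose into one, and in particular verifying the complete-residue-system condition in the merged bottom block; once this is done, the induction closes because $r_1<r<d$ strictly decreases and the process must terminate (in the worst case when the dimension drops to $1$, where the assumption $\mathcal{Z}\neq\emptyset$ becomes vacuous).
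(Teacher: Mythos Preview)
Your proposal is correct and follows essentially the same route as the paper: induction on $d$, invoking the single-step quasi-product decomposition from \cite{DHL2019}, and when the top factor still has nonempty periodic zero set, applying the inductive hypothesis to it and lifting the resulting $r\times r$ unimodular matrix to a block-diagonal one on $\R^d$. Your discussion of the bookkeeping---composing the two block-lower-triangular structures and verifying that the merged bottom digits form a complete residue system modulo $G_1^{\mathrm{new}}\Z^{d-r_1}$---is in fact more explicit than the paper's, which simply writes out the resulting matrices.
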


\smallskip

\begin{proof} We will prove the result by using induction on dimension $d$. When $d=1$, the assumption $\Z[R, B]=\Z$ forces $\gcd(B)=1$. In this case, it has been proved in \cite[Section 5]{DHL2019} that ${\mathcal Z}(\mu)=\emptyset$ .
Hence, if ${\mathcal Z}(\mu)\neq\emptyset$, then $d\geq2$.
\bigskip

For $d=2$, as we have discussed, we can conjugate with an integer unimodular matrix $M\in M_2(\Z)$ so that $(MRM^{-1}, MB)$ are of the quasi-product form on $\R\times\R$ as following:
\begin{equation*}
    MRM^{-1}=\left(
  \begin{array}{cc}
   R_1  &  0 \\
   C_1 & G_1 \\
  \end{array}
\right)
\end{equation*}
\begin{equation*}
    MB=\{(u, d_{j}(u))^t: u\in B_1, 1\le j\le |G_1|\}\subset\Z^{2},
\end{equation*}
and  $\{d_{j}(u): 1\le j\le |G_1|\}\subset\Z$ is a complete set of representatives $\pmod{|G_1|}$ (Here $|G_1|\ge 2$ is a positive integer). Moreover,  $\gcd(B_1)=1$ since $\Z[MRM^{-1}, MB]=\Z^2$.
Then $\mu(MRM^{-1}, MB)$ has a quasi-product form $\mu_1$ and $\mu_2^{(x_1)}$ on $\R\times\R$ where $\mu_1=\mu(R_1, B_1)$ is the self-similar measure supported on $T(R_1, B_1)\subset\R$. Since $\gcd(B_1)=1$ and  $(R_1, \pi_1(B), L_1'(l_2))$ forms a Hadamard triple on $\R$ for some $L'$, $l_2\in\pi_2(L')$, we have ${\mathcal Z}(\mu_1)=\emptyset$. This justifies the result  on dimension two. 


\bigskip

Assume our statement 
is true for any dimensions less than $d$. On dimension $d$, we can conjugate with an integer unimodular matrix $M_1\in M_{d}(\Z)$ so that $(M_1RM_1^{-1}, M_1B)$ has quasi-product form
\begin{equation*}
    M_1RM_1^{-1}=\left(
  \begin{array}{cc}
   R_1  &  0 \\
    C_1 & G_1 \\
  \end{array}
\right)
\end{equation*}
\begin{equation*}
    M_1B=\{(u, d_{j}(u))^t: u\in B_1, 1\le j\le |\det G_1|\}\subset\Z^{d},
\end{equation*}
where $\Z[R_1, B_1]=\Z^r$ and $\{d_{j}(u): 1\le j\le |\det G_1|\}\subset\Z^{d-r}$ is a complete set of representatives $\pmod{G_1\Z^{d-r}}$. If ${\mathcal Z}(\mu(R_1, B_1))=\emptyset$, then $\mu_1=\mu(R_1, B_1)$ and $\mu_2^{(x_1)}$ is the desired quasi-product form on $\R^{r}\times \R^{d-r}$.

\bigskip

If ${\mathcal Z}(\mu(R_1, B_1))\neq\emptyset$, by the assumption on $r<d$, we can conjugate with an unimodular matrix $M_2\in M_r(\Z)$ such that 
\begin{equation*}
    M_2R_1M_2^{-1}=\left(
  \begin{array}{cc}
   R_2  &  0 \\
    C_2 & G_2 \\
  \end{array}
\right)
\end{equation*}
\begin{equation*}
    M_2B_1=\{M_2u:u\in B_1\}=\{(v, w_j(v))^t: v\in B_2, 1\le j\le |\det G_2|\}\subset\Z^{r},
\end{equation*}
where $\{w_j(v): 1\le j\le |G_2|\}\subset\Z^{r-r_1}$ is a complete set of representatives $\pmod{G_2\Z^{r-r_1}}$ and ${\mathcal Z}(\mu(R_2, B_2))=\emptyset.$
Denote $$M=\tiny{\left(
  \begin{array}{cc}
   M_2  &  0 \\
    0     &  \text{I}_{d-r} \\
  \end{array}
\right)}M_1$$
where $\text{I}_{d-r}$ is the identity matrix and rewrite $d_{i}(u)$ as $d_i(v,w_j(v))$ if $M_2u=(v, w_j(v))^t$. Then $(MRM^{-1}, MB)$ has quasi-product form on $\R^{r_1}\times \R^{d-r_1}$ as following
\begin{equation*}
    MRM^{-1}=\left(
  \begin{array}{cc}
   R_2  &  0 \\
    C_2' & G_2' \\
  \end{array}
\right)
\end{equation*}
where $$G_2'=\left(
  \begin{array}{cc}
   G_2  &  0 \\
    C_2'' & G_1 \\
  \end{array}
\right)$$
for some matrix $C_2''\in M_{d-r, r-r_1}(\Z)$ and 
\begin{equation*}
    MB=\Big\{(v,w_{j_2}(v), d_{j_1}(v,w_{j_2}(v))^t: v\in B_2, 1\le j_2\le |\det G_2|, 1\le j_1\le |\det G_1|\Big\},
\end{equation*}
where $$\{(w_{j_2}(v), d_{j_1}(v,w_{j_2}(v))^t: 1\le j_2\le |\det G_2|, 1\le j_1\le |\det G_1|\}\subset\Z^{d-r_1}$$ is a complete set of representatives $\pmod{G_2'\Z^{d-r_1}}$. 
As ${\mathcal Z}(\mu(R_2, B_2))=\emptyset,$ $\mu_1=\mu(R_2, B_2)$ and $\mu_2^{(x_1)}$ is the desired quasi-product form decomposition of the self-affine measure $\mu(R,B)$ on $\R^{r_1}\times\R^{d-r_1}$.

\end{proof}

\bigskip

The spectrality for $\mu_1$ and $\mu_2^{(x_1)}$ in the quasi-product-form decomposition  was proved in \cite{DHL2019}

\begin{theorem}\cite[Proposition 8.4]{DHL2019}
Suppose $(R, B, L)$ is a Hadamard triple and ${\mathcal Z}(\mu)\neq\emptyset$. Let $\mu_1, \mu_2^{(x_1)}$ be a quasi-product form of $\mu$ on $\R^{r}\times\R^{d-r}$, then $\mu_1$ is spectral  and for $\mu_1-$almost every $x_1\in T(R_1, B_1)$, $\mu_2^{(x_1)}$ admits a spectrum $\Gamma$ which is a full-rank lattice in $\R^{d-r}$.
\end{theorem}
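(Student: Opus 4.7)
The plan is to handle the two assertions separately: (a) $\mu_1$ is spectral, and (b) for $\mu_1$-a.e.\ $x_1$, the Cantor-Moran fiber measure $\mu_2^{(x_1)}$ admits a full-rank lattice spectrum.

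For part (a), the quasi-product form decomposition already produces an $L'\equiv L\pmod{R^t}$ with the property that, for each $l_2\in\pi_2(L')$, the triple $(R_1,\pi_1(B),L_1'(l_2))$ is a Hadamard triple on $\R^r$; moreover, $\#\pi_1(B)=N_1=\#L_1'(l_2)$ by construction. Hence I would apply Theorem 1.1 (the Dutkay-Hausserman-Lai/{\L}aba-Wang theorem) directly to $(R_1,\pi_1(B),L_1'(l_2))$ for a single choice of $l_2$ to produce an exponential orthonormal basis for $L^2(\mu_1)$. Part (a) is then finished; note that the decomposition was arranged precisely so that $\mathcal{Z}(\mu_1)=\emptyset$, so Theorem \ref{main-thm2} could be invoked to upgrade the spectrum to a $b$-lacunary one, though this is not needed for the current statement.

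For part (b), the structural fact I would use is that the digit sets $B_2(i_k)=\{d_j(u_{i_k}):1\le j\le N_2\}$ are, at every level $k$, a \emph{complete} set of coset representatives of $\Z^{d-r}/G_1\Z^{d-r}$. Consequently, for any complete set $L_0$ of representatives of $\Z^{d-r}/G_1^t\Z^{d-r}$, the level-$k$ triple $(G_1,B_2(i_k),L_0)$ is Hadamard, and the mask $m_{B_2(i_k)}(\xi)=\frac{1}{N_2}\sum_{b\in B_2(i_k)}e^{-2\pi i b\cdot\xi}$ satisfies the standard partition-of-unity identity $\sum_{l\in L_0}|m_{B_2(i_k)}(G_1^{-t}(\xi+l))|^2=1$. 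Taking $\Gamma=\Z^{d-r}$ (or a suitable sublattice when $L_0$ is not the canonical one) and iterating this identity through the infinite convolution yields, formally,
$$
\sum_{\gamma\in\Gamma}\bigl|\widehat{\mu_2^{(x_1)}}(\xi+\gamma)\bigr|^2=1,\qquad\xi\in\R^{d-r}.
$$
By Parseval on $\R^{d-r}/\Gamma^*$, this is equivalent to $E(\Gamma)$ being an orthonormal basis for $L^2(\mu_2^{(x_1)})$, which gives the full-rank lattice spectrum.

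The main obstacle is making the iteration in the displayed identity rigorous despite the \emph{non-stationary} nature of the Cantor-Moran convolution (the digit set $B_2(i_k)$ genuinely depends on $k$ through the address $(i_k)_{k\ge 1}$ of $x_1$). Orthogonality of $E(\Gamma)$ follows immediately from the per-level Hadamard property, but completeness requires a Strichartz-type tail estimate of the form $|\widehat{\mu_{2,>k}^{(x_1)}}(\gamma)|^2\ge c>0$ along the $\gamma\in\Gamma$ that witness the orthogonality. Here the complete-residue-system assumption is decisive: the masks $|m_{B_2(i_k)}|^2$ are bounded below by a universal constant on a fixed neighborhood of $0$, so the tail products do not degenerate. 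The $\mu_1$-a.e.\ qualifier on $x_1$ is used at this step to exclude a negligible set of addresses on which the cumulative drift $\sum_k C_k u_{i_k}$ or the mask product fails to be well-controlled; on the complementary full-measure set, the tail lower bound combined with Theorem \ref{theorem_LW} (applied in the $G_1$-setting) delivers completeness, and hence the desired lattice spectrum $\Gamma$.
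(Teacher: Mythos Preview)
The paper does not supply a proof of this theorem; it is imported verbatim from \cite[Proposition 8.4]{DHL2019} and used as a black box, so there is no in-paper argument to compare against. Your part (a) is fine: the quasi-product decomposition hands you a Hadamard triple $(R_1,\pi_1(B),L_1'(l_2))$ on $\R^r$, and Theorem~1.1 then makes $\mu_1$ spectral.

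Part (b), however, has a genuine gap. You correctly isolate the decisive structural fact---each $B_2(i_k)$ is a complete residue system modulo $G_1$, so $(G_1,B_2(i_k),L_0)$ is Hadamard for every $k$ and orthogonality of $E(\Z^{d-r})$ follows. But your proposed route to completeness via Theorem~\ref{theorem_LW} does not go through. The tail there must be bounded below at the specific points $\lambda\in\Lambda_k$, and ``masks bounded below on a neighborhood of $0$'' only controls the far factors, not the first ones where $(G_1^t)^{-(k+j)}\lambda$ is merely bounded. Already for $G_1=2$, $B_2(i_k)\equiv\{0,1\}$, $L_0=\{0,1\}$ on $\R$ one has $\mu_2^{(x_1)}$ equal to Lebesgue measure on $[0,1]$, $\Lambda_k=\{0,\dots,2^k-1\}$, and $|\widehat{\mu_{2,>k}}(2^k-1)|\to 0$, so $\delta(\Lambda)=0$; moreover $\bigcup_k\Lambda_k=\Z_{\ge 0}$, which is neither a spectrum nor a lattice. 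The argument that actually yields a full-rank lattice spectrum---and the true source of the $\mu_1$-a.e.\ qualifier---is the tiling statement recorded in the paragraph just before the theorem: for $\mu_1$-a.e.\ $x_1$ the support of $\mu_2^{(x_1)}$ \emph{tiles} $\R^{d-r}$ by $\Z^{d-r}$. Combined with $\widehat{\mu_2^{(x_1)}}(n)=\delta_{n,0}$ for $n\in\Z^{d-r}$ (which you essentially have), this forces $\mu_2^{(x_1)}$ to be Lebesgue measure on a fundamental domain, and then $\Gamma=\Z^{d-r}$ is a spectrum by elementary Fourier analysis on the torus. Your reading of the a.e.\ condition (drift control, mask degeneration) misses the point; it enters solely through the tiling.
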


\bigskip

The following  lemma is proved in \cite{DJ2007}.

\bigskip

\begin{lemma}[\cite{DJ2007}, Lemma 4.4]\label{product-integral}
If $\Lambda_1$ is a spectrum for the measure $\mu_1$, then for all $x_2\in \R^{d-r}$
$$\sum_{\lambda_1\in\Lambda_1}|\widehat{\mu}(x_1+\lambda_1, x_2)|^2=\int_{T(A_1, \pi_1(B))}|\widehat{\mu}_{2}^{(s)}(x_2)|^2d\mu_{1}(s).$$

\end{lemma}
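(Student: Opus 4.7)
The plan is to reduce the identity to a direct application of Parseval's identity in $L^2(\mu_1)$, after using the quasi-product-form structure of $\mu$ to express $\widehat{\mu}(x_1+\lambda_1, x_2)$ as the Fourier coefficient of a bounded function on $T(R_1,\pi_1(B))$ against $\mu_1$.

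First, I would establish a disintegration of $\mu$ over its first-coordinate projection $\mu_1$. By the quasi-product-form description recalled just before the lemma, a generic point of $T(R,B)$ is produced from a digit code $(u_{i_k})_{k\ge 1}$ which determines $s=\sum_{k\ge 1}R_1^{-k}u_{i_k}\in T(R_1,\pi_1(B))$ together with $c(s):=\sum_{k\ge 1}C_k u_{i_k}$, and its second coordinate is $c(s)+t$ where $t$ is distributed according to the Cantor--Moran measure $\mu_2^{(s)}$. This identifies the conditional distribution of $\mu$ on the fiber above $s$ as $\delta_{c(s)}\ast\mu_2^{(s)}$, so the disintegration reads
$$
\mu=\int_{T(R_1,\pi_1(B))}\bigl(\delta_{c(s)}\ast\mu_2^{(s)}\bigr)\,d\mu_1(s).
$$

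Taking the Fourier transform in both coordinates and peeling off the phase in $\lambda_1$ gives
$$
\widehat{\mu}(x_1+\lambda_1,x_2)=\int_{T(R_1,\pi_1(B))}e^{-2\pi i\lambda_1\cdot s}\,F(s)\,d\mu_1(s),
$$
where $F(s):=e^{-2\pi i(x_1\cdot s+x_2\cdot c(s))}\,\widehat{\mu_2^{(s)}}(x_2)$. Since each $\mu_2^{(s)}$ is a probability measure, $|F(s)|\le 1$, so $F\in L^\infty(\mu_1)\subset L^2(\mu_1)$. Because $\Lambda_1$ is assumed to be a spectrum for $\mu_1$, the family $\{e^{2\pi i\lambda_1\cdot s}\}_{\lambda_1\in\Lambda_1}$ is an orthonormal basis of $L^2(\mu_1)$, and Parseval's identity applied to $F$ yields
$$
\sum_{\lambda_1\in\Lambda_1}|\widehat{\mu}(x_1+\lambda_1,x_2)|^2=\|F\|_{L^2(\mu_1)}^2=\int_{T(R_1,\pi_1(B))}|\widehat{\mu_2^{(s)}}(x_2)|^2\,d\mu_1(s),
$$
the unit-modulus factor $e^{-2\pi i(x_1\cdot s+x_2\cdot c(s))}$ disappearing under $|\cdot|^2$. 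This is the claimed identity; in particular the right-hand side is independent of $x_1$, as one expects from absorbing the extra phase $e^{-2\pi ix_1\cdot s}$ into the orthonormal expansion.

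The delicate point is the non-uniqueness of the digit coding: a priori $c(s)$ and $\mu_2^{(s)}$ depend on the sequence $(u_{i_k})$, not on $s$ alone. In the self-affine setting, however, the overlap set of points of $T(R_1,\pi_1(B))$ admitting more than one coding is $\mu_1$-null, so $s\mapsto F(s)$ is $\mu_1$-almost everywhere well-defined and measurable; alternatively one can bypass the issue entirely by deriving the disintegration from iterating the invariance identity $\mu=\frac{1}{\#B}\sum_{b\in B}\mu\circ\tau_b^{-1}$ and passing to the weak-$*$ limit, which is how the quasi-product form was constructed in the first place.
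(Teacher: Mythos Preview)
The paper does not supply its own proof of this lemma; it merely cites \cite{DJ2007}. Your argument is correct and is essentially the standard one: express $\widehat{\mu}(x_1+\lambda_1,x_2)$ via the fiberwise disintegration of $\mu$ over $\mu_1$, recognize it as the $\lambda_1$-th Fourier coefficient (in $L^2(\mu_1)$) of the bounded function $F(s)=e^{-2\pi i(x_1\cdot s+x_2\cdot c(s))}\widehat{\mu_2^{(s)}}(x_2)$, and apply Parseval. Your remark about the $\mu_1$-null overlap set handling the well-definedness of $s\mapsto(c(s),\mu_2^{(s)})$ is the right way to close the measurability issue.
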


\bigskip

We recall also the Jorgensen-Pedersen Lemma for checking when a countable set is a
spectrum for a measure.

\bigskip

\begin{lemma}\cite{JP1998}\label{JP-crit}
Let $\mu$ be a  compactly supported probability measure on $\R^d$. Then a countable set $\Lambda$ is a spectrum for $L^2(\mu)$ if and only if
$$\sum_{\lambda\in\Lambda}|\widehat{\mu}(\xi+\lambda)|^2\equiv1, \quad\ \xi\in\R^d.$$
\end{lemma}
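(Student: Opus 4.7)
The plan is to recognize the stated pointwise identity as a Parseval/Bessel equation for the unit vectors $e_\xi(x):=e^{2\pi i\xi\cdot x}\in L^2(\mu)$, so that both directions of the equivalence reduce to a single inner-product computation. The starting point is
\begin{equation*}
\langle e_\xi,e_\lambda\rangle_{L^2(\mu)}=\int e^{-2\pi i(\lambda-\xi)\cdot x}\,d\mu(x)=\widehat{\mu}(\lambda-\xi),\qquad \|e_\xi\|_{L^2(\mu)}^2=\widehat{\mu}(0)=1,
\end{equation*}
where the last norm identity uses that $\mu$ is a probability measure. Replacing $\xi$ by $-\xi$ in the hypothesis rewrites the condition $\sum_{\lambda\in\Lambda}|\widehat{\mu}(\xi+\lambda)|^2\equiv 1$ as $\sum_{\lambda\in\Lambda}|\langle e_{-\xi},e_\lambda\rangle|^2=\|e_{-\xi}\|^2$ for every $\xi\in\R^d$.

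For the forward implication I would simply apply Parseval to the unit vector $e_{-\xi}$ under the assumption that $E(\Lambda)$ is an orthonormal basis; this delivers the identity immediately.

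For the reverse implication I would proceed in two steps. First, I extract orthonormality of $E(\Lambda)$ directly from the identity by specializing to $\xi=-\lambda_0$ for an arbitrary $\lambda_0\in\Lambda$: isolating the $\lambda=\lambda_0$ summand, which equals $|\widehat{\mu}(0)|^2=1$, forces $\widehat{\mu}(\lambda-\lambda_0)=0$ for every $\lambda\in\Lambda\setminus\{\lambda_0\}$, and together with $\|e_\lambda\|=1$ this is exactly orthonormality of $E(\Lambda)$. Second, with orthonormality in hand, Bessel's inequality reads $\sum_\lambda|\langle f,e_\lambda\rangle|^2\le\|f\|^2$ for all $f\in L^2(\mu)$, with equality precisely when $f\in\overline{\operatorname{span}}\,E(\Lambda)$. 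The hypothesis furnishes equality for every $f=e_{-\xi}$, so all pure exponentials lie in $\overline{\operatorname{span}}\,E(\Lambda)$. A density argument --- which is where compact support of $\mu$ is genuinely used --- then shows that $\{e_\xi:\xi\in\R^d\}$ spans $L^2(\mu)$ densely, either by Stone--Weierstrass on a compact neighborhood of $\text{supp}(\mu)$, or by observing that any $f\in L^2(\mu)$ orthogonal to every $e_\xi$ has $\widehat{f\,d\mu}\equiv 0$ and therefore vanishes $\mu$-almost everywhere. Combined with the previous step this gives $\overline{\operatorname{span}}\,E(\Lambda)=L^2(\mu)$, i.e.\ completeness.

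The only step that requires care is the density of pure exponentials in $L^2(\mu)$; without compact support of $\mu$ this may fail and the equivalence could break. The orthonormality extraction and the Bessel-to-Parseval promotion are essentially automatic once the inner-product identity $\langle e_\xi,e_\lambda\rangle=\widehat{\mu}(\lambda-\xi)$ is in place.
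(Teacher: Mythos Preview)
The paper does not supply its own proof of this lemma; it is stated with a citation to Jorgensen--Pedersen \cite{JP1998} and used as a black box. Your argument is correct and is the standard proof of this classical criterion: the inner-product identity $\langle e_\xi,e_\lambda\rangle=\widehat{\mu}(\lambda-\xi)$, Parseval for the forward direction, specialization $\xi=-\lambda_0$ to extract orthonormality, and density of exponentials (via Stone--Weierstrass on the compact support) to upgrade Bessel equality to completeness.
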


\bigskip

We now state a general class of spectrum for the quasi product-form. 
\begin{theorem}\label{product-spectrum}
Suppose $\Gamma$ is a spectrum of $\mu_2^{(x_1)}$ for $\mu_1-$almost every $x_1$ and $\{\Lambda_{\gamma}\}_{\gamma\in\Gamma}$ is a class of spectra of $\mu_1$. Then $\bigcup_{\gamma\in\Gamma}\left(\Lambda_{\gamma}\times\{\gamma\}\right)$ is a spectrum of $\mu$.
\end{theorem}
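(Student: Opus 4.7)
The plan is to verify the Jorgensen--Pedersen criterion (Lemma \ref{JP-crit}) directly for the candidate set $\Lambda := \bigcup_{\gamma\in\Gamma}\left(\Lambda_{\gamma}\times\{\gamma\}\right)$. That is, I would aim to establish the identity
$$
\sum_{\gamma\in\Gamma}\sum_{\lambda\in\Lambda_\gamma}\bigl|\widehat{\mu}(\xi_1+\lambda,\ \xi_2+\gamma)\bigr|^2 \equiv 1
$$
for every $\xi=(\xi_1,\xi_2)^t\in\R^r\times\R^{d-r}$. Since all summands are nonnegative, no delicate convergence issues arise, and the JP criterion simultaneously yields orthogonality and completeness once the identity is verified.

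The first step is to fix $\gamma\in\Gamma$ and apply Lemma \ref{product-integral}, taking advantage of the hypothesis that $\Lambda_\gamma$ is a spectrum of $\mu_1$. With $x_2=\xi_2+\gamma$ and $x_1=\xi_1$ this yields
$$
\sum_{\lambda\in\Lambda_\gamma}\bigl|\widehat{\mu}(\xi_1+\lambda,\ \xi_2+\gamma)\bigr|^2 \;=\; \int_{T(R_1,\pi_1(B))}\bigl|\widehat{\mu_2^{(s)}}(\xi_2+\gamma)\bigr|^2\,d\mu_1(s).
$$
Summing over $\gamma\in\Gamma$ and interchanging sum and integral by Tonelli's theorem (permissible thanks to nonnegativity) converts the left-hand side of the target identity into
$$
\int_{T(R_1,\pi_1(B))}\Bigl(\sum_{\gamma\in\Gamma}\bigl|\widehat{\mu_2^{(s)}}(\xi_2+\gamma)\bigr|^2\Bigr)d\mu_1(s).
$$

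For the final step, I invoke the second hypothesis: $\Gamma$ is a spectrum of $\mu_2^{(s)}$ for $\mu_1$-almost every $s$. Applying Lemma \ref{JP-crit} to the compactly supported probability measure $\mu_2^{(s)}$, the inner sum equals $1$ for $\mu_1$-a.e.\ $s$, so the integral collapses to $\mu_1(T(R_1,\pi_1(B)))=1$, completing the verification.

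The main conceptual obstacle is simply the bookkeeping needed to combine a single spectrum $\Gamma$ in the transverse direction with an entire family $\{\Lambda_\gamma\}$ of spectra in the base direction; the key identity that reconciles them is precisely Lemma \ref{product-integral}, which says that the weighted $L^2$-average of $|\widehat{\mu_2^{(s)}}|^2$ against $\mu_1$ recaptures the base spectral sum for $\widehat{\mu}$, independently of which spectrum of $\mu_1$ one selects. Once this is invoked $\gamma$ by $\gamma$, Tonelli's theorem does the rest.
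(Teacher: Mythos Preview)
Your proposal is correct and follows essentially the same approach as the paper's own proof: apply Lemma \ref{product-integral} for each $\gamma$ (using that $\Lambda_\gamma$ is a spectrum of $\mu_1$), sum over $\gamma$ and interchange with the integral, then use that $\Gamma$ is a spectrum of $\mu_2^{(s)}$ for $\mu_1$-a.e.\ $s$ to collapse the inner sum to $1$ via Lemma \ref{JP-crit}. The only difference is that you make the use of Tonelli explicit, which the paper leaves implicit.
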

\begin{proof}
Since $\Lambda_{\gamma}$ is a spectrum of $\mu_1$, by Lemma \ref{product-integral}, we have
$$\sum_{\gamma\in\Gamma}\sum_{\lambda\in\Lambda_{\gamma}}|\widehat{\mu}(x_1+\lambda, x_2+\gamma)|^2=\int_{T(A_1, \pi_1(B))}\sum_{\gamma\in\Gamma}|\widehat{\mu}_2^{(s)}(x_2+\gamma)|^2d\mu_1(s)=\int_{T(A_1, \pi_1(B))}1d\mu_1(s)=1.$$
This means that $\bigcup_{\gamma\in\Gamma}\left(\Lambda_{\gamma}\times\{\gamma\}\right)$ is a spectrum of $\mu$ by Lemma \ref{JP-crit}.
\end{proof}

\bigskip
In \cite{DHL2019}, only the case that all $\Lambda_{\gamma}$ are the same was considered. However, to construct zero Beurling dimension spectra, $\Lambda\times\Gamma$ is not enough since $\{0\}\times\Gamma$ is contained in the spectrum and this will contribute to the Beurling dimension $d-r$ since $\Gamma$ is lattice on $\R^{d-r}$. To overcome this problem, we will need different sparse spectra for each $\gamma$.  Now we can prove our main result.

\bigskip

\begin{proof} ({\bf Proof of Theorem \ref{main-thm}})
If ${\mathcal Z}(\mu)=\emptyset$, then the result follows from Theorem \ref{main-thm2}. Suppose now that ${\mathcal Z}(\mu)\neq\emptyset$. Since conjugation maintains the Beurling dimension of $\Lambda$,  without loss of generality, we assume $\mu$ has quasi-product form $\mu_1=\mu(R_n, B_n), \mu_2^{(x_1)}$ on $\R^r\times\R^{d-r}$ with ${\mathcal Z}(\mu_1)=\emptyset$ from Theorem \ref{quasi-prod-form-decomp}. Moreover, $(R_n, B_n, L_n)$ forms a Hadamard triple. From Theorem \ref{main-thm2},
for any $b>1$, we can construct a $b-$lacunary spectrum of $\mu_1$.

\bigskip

Let the full-rank lattice $\Gamma=A\Z^{d-r}$ is a spectrum of $\mu_2^{(x_1)}$ for $\mu_1-$almost every $x_1\in T(R_1, B_1)$. We choose a sequence of vectors $\{m^0_n\}_{n=0}^{\infty}\subset\Z^{d-r}$ such that $m^0_0=0$ and $|m^0_n|=2^{n-1}$. Let $N_{0, r}=0$ and  
$$
N_{n,r} = \#\{m\in{\mathbb Z}^{d-r}: 2^{n-1}\le |m|<2^{n}\}, \quad n\geq1.
$$
For any $n\geq0$, let  $0\in\Lambda_{m^0_{n}}$ be a spectrum of $\mu_{1}$ satisfies  $\Lambda_{m^0_n}$  is ${8^{N_{n,r}+1}}-$lacunary .
By a reverse triangle inequality,  it implies that 
\begin{equation}\label{min-lacuanary}
\min\{|\lambda-\lambda'|: \lambda\neq\lambda'\in\Lambda_{m_n}\}\geq 4^{N_{n,r}+1}.
\end{equation}
We now enumerate the set 
$$
\{m\in \Z^{d-r}: 2^{n-1}\le |m|<2^{n}\} = \{m_{n,1},...,m_{n,N_{n,r}}\}.
$$
We then take spectrum on each of integers $m_{n,j}$ as follows:
$$
\Lambda_{m_{n,j}}=\Lambda_{m^0_n}+ j\cdot2^{N_{n,r}} {\bf e}_1, \ j=1,...,N_{n,r},
$$
where ${\bf e}_1$ is the unit vector in the $x_1$-direction. For any $m\in\Z^{d-r}\setminus\{0\}$, there is an unique  integer $n\geq0$  such that $2^n\le |m|< 2^{n+1}$. Therefore, by Theorem \ref{product-spectrum},
$$
\Lambda=\bigcup_{m\in\Z^{d-r}}\left(\Lambda_{m}\times\{Am\}\right)=:\small{\left(
  \begin{array}{cc}
   I_r  &  0 \\
    0 & A \\
  \end{array}
\right)}\Lambda'
$$
is a spectrum of $\mu$ where $I_r$ is the $r\times r$ identity matrix and
$$\Lambda'=\bigcup_{m\in\Z^{d-r}}\left(\Lambda_{m}\times\{m\}\right).$$
By Proposition \ref{conjugate}, $\dim^+(\Lambda)=\dim^+(\Lambda')$. Next we will prove that $\dim^+(\Lambda')=0$.

\bigskip

\textbf{Claim 1:}
For any $n\geq0$ and $1\le j_1, j_2\le N_{n,r} $, 
$$\Lambda_{n, m_{j_1}}\cap \Lambda_{n, m_{j_2}}=\emptyset.$$
\proof Suppose there is an positive integer $n$ and $1\le j_1, j_2\le N_{n,r} $ such that 
$$\Lambda_{n, m_{j_1}}\cap \Lambda_{n, m_{j_2}}\neq\emptyset,$$
and choose an element $x$ in the intersection. Then it has two expressions 
$$x=\lambda_1+j_1\cdot2^{N_{n,r}} {\bf e}_1=\lambda_2+j_2\cdot2^{N_{n,r}} {\bf e}_1$$
where $\lambda_1, \lambda_2$ are distinct elements in $\Lambda_{m^0_n}$ as $j_1\neq j_2$. It implies that 
$$\lambda_1-\lambda_2=(j_1-j_2)2^{N_{n,r}} {\bf e}_1.$$
The right hand of the equation implies 
\begin{equation}\label{eq_KK}
    |\lambda_1-\lambda_2|< N_{n,r}\cdot2^{N_{n,r}}\le  2^{N_{n,r}}\cdot 2^{N_{n,r}}={4^{N_{n,r}}}.
\end{equation}
But from \eqref{min-lacuanary}, 
$$|\lambda_1-\lambda_2|\geq {4^{N_{n,r}+1}},$$
which is a contradiction. Hence, the claim is true.

\bigskip

\textbf{Claim 2:}
 $$\delta_n:=\inf\left\{|\lambda_1-\lambda_2|:
 \lambda_1\neq\lambda_2\in\bigcup_{j=1}^{N_{n,r}}\Lambda_{n,j}\right\}=2^{N_{n,r}}.$$
 \proof Note that $2^{N_{n,r}}{\bf e}_1\in\Lambda_{n,1}$, $2\cdot2^{N_{n,r}}{\bf e}_1\in\Lambda_{n,2}$ and
 $$\left|2^{N_{n,r}}{\bf e}_1-2\cdot2^{N_{n,r}}{\bf e}_1\right|=2^{N_{n,r}},$$
 so $\delta_n\le 2^{N_{n,r}}$. On the other hand, for any distinct elements $\lambda_1\in\Lambda_{n, j_1}, \lambda_2\in\Lambda_{n, j_2}$, we can write them as
 $$\lambda_1=\lambda_1'+j_1\cdot2^{N_{n,r}}{\bf e}_1,\quad \lambda_2=\lambda_2'+j_2\cdot2^{N_{n,r}}{\bf e}_1$$
 for some $\lambda_1', \lambda_2'\in\Lambda_{m_n^0}$. If $\lambda_1'=\lambda_2'$, then $j_1\neq j_2$
and
$$|\lambda_1-\lambda_2|=\left|(j_1-j_2)\cdot2^{N_{n,r}}{\bf e}_1\right|\geq 2^{N_{n,r}}.$$
If $\lambda_1'\neq\lambda_2'$, then from \eqref{min-lacuanary}, we have
$$|\lambda_1'-\lambda_2'|\geq 4^{N_{n,r}+1}.$$
Hence, with the inequality obtained in (\ref{eq_KK}),  we have the following 
$$|\lambda_1-\lambda_2|\geq |\lambda_1'-\lambda_2'|-2^{N_{n,r}}\cdot|j_1-j_2| \geq 4^{N_{n,r}+1}-4^{N_{n,r}}>2^{N_{n,r}}.
$$
This justifies the claim. 
 \bigskip

 Now we return to the proof of theorem. For any $h>1$ and $(x_1, x_2)^t\in\R^{r}\times\R^{d-r}$,  choose  $n_1\geq0$ be the biggest integer and $n_2>n_1$ be the smallest  integer such that
$$Q^{d-r}_h(x_2)\cap\Z^{d-r}\subset B(0, 2^{n_2})\setminus B(0, 2^{n_1})\cup\{0\}.$$
Then we have 
$$2^{n_2-1}-2^{n_1+1}\le2h\sqrt{r}\le 2^{n_2}-2^{n_1}.$$
Then 
\begin{equation}\label{eqn2-n1}
n_2-n_1\le 4\log_2(4h\sqrt{r}).
\end{equation}

\bigskip

Note that $Q_h^d((x_1,x_2)^t) = Q_h^r (x_1)\times Q_h^{d-r}(x_2)$. This allows us to have the following:
\begin{eqnarray}\label{no-in-Q}
\#(\Lambda'\cap Q_h^d((x_1, x_2)^t))&=&\sum_{m\in Q_h^{d-r}(x_2)\cap\Z^{d-r}}\#\left(\Lambda_{m}\cap Q_h^{r}(x_1)\right)\nonumber\\
&\le&\sum_{n=n_1+1}^{n_2}\sum_{2^{n-1}\le |m|<2^{n}}\#\left(\Lambda_{m}\cap Q_h^{r}(x_1)\right)+\#\left(\Lambda_{0}\cap Q_h^{r}(x_1)\right).
\end{eqnarray}
We now decompose the first summand into two parts
$$
\sum_{n=n_1+1}^{n_2} (....) = \sum_{\tiny{\begin{array}{l} n_1<n\le n_2,\\  2^{N_{n,r}}>2h\sqrt{r}\end{array}}} (....) +\sum_{\tiny{\begin{array}{l} n_1< n\le n_2,\\  2^{N_{n,r}}\le 2h\sqrt{r}\end{array}}} (....)
$$
In the first case,  
$${2^{N_{n,r}}}>2h\sqrt{r}=\text{diam}(Q^{r}_h(x_1)).$$ 
From the Claim 1 and Claim 2, we have 
$\bigcup\limits_{2^{n-1}\le |m|<2^{n}}\Lambda_{m}$ has at most one element in $Q^{r}_h(x_1)$. So 
\begin{eqnarray}\label{no-1}
\sum\limits_{\tiny{\begin{array}{l} n_1< n\le n_2,\\  2^{N_{n,r}}>2h\sqrt{r}\end{array}}}
\sum\limits_{2^{n-1}\le |m|<2^{n}}   \#\left(\Lambda_{m}\cap Q_h^{r}(x_1)\right)\le&\sum\limits_{\tiny{\begin{array}{l} n_1< n\le n_2,\\  2^{N_{n,r}}>2h\sqrt{r}\end{array}} }1\nonumber\\
\le& n_2-n_1\nonumber\\
\le& 4\log_2(4h\sqrt{r}).
\end{eqnarray}
In the second case, for the integer  $n_1< n\le n_2$ with 
$$2^{N_{n,r}}\le 2h\sqrt{r},$$ 
we have
$$
N_{n,r}\le \log_2 (2h\sqrt{r}).
$$
For $2^{n-1}\le |m|<2^{n}$,   
$\Lambda_m$ is a $8^{N_{n, r}+1}-$lacunary set, so by the Claim in Proposition \ref{no-dim-of-lacu},
\begin{equation*}
\#\left(\Lambda_{m}\cap Q_h^r(x_1)\right)\le\log_{8^{N_{n, r}+1}}(2h\sqrt{r})+2\le  3\log_{2}(2h\sqrt{r}).
\end{equation*}
For $\Lambda_0$, it is a $8-$lacunary set, 
\begin{equation}\label{no-0}
    \#\left(\Lambda_{0}\cap Q_h^{r}(x_1)\right)\le\log_{8}(2h\sqrt{r}) +2\le 3\log_{2}(2h\sqrt{r}).
\end{equation}
We have  
\begin{eqnarray}\label{no-2}
&&\sum_{\{n:n_1< n\le n_2,2^{N_{n,r}}\le2h\sqrt{r}\} }\sum_{2^{n-1}\le |m|<2^{n}}\#\left(\Lambda_{m}\cap Q_h^{r}(x_1)\right)\nonumber\\
& \le & \sum_{\{n:n_1<n\le n_2,2^{N_{n,r}}\le2h\sqrt{r}\} } N_{n,r} (3\log_2(2h\sqrt{d}))\nonumber\\
& \le & (n_2-n_1)\cdot \log_2(2h\sqrt{r})\cdot (3\log_2(2h\sqrt{r}))\nonumber\\
&\le& 12 \cdot \left(\log_2(4h\sqrt{r})\right)^3 \ \ (\mbox{using} \  (\ref{eqn2-n1})).
\end{eqnarray}
Putting \eqref{no-1}, \eqref{no-0}  and \eqref{no-2} into \eqref{no-in-Q}, we have 
$$
\#(\Lambda'\cap Q_h^d((x_1, x_2)^t))\le C\cdot \left(\log_2(4h\sqrt{r})\right)^3
$$
for some constant $C>0$. As for any $\gamma>0$, $$\lim_{h\to +\infty}\frac{\left(\log_2(4h\sqrt{r})\right)^3}{h^{\gamma}} = 0,
$$
we have
$$D^+_{\gamma}(\Lambda')=\limsup_{h\to+\infty}\sup_{(x_1,x_2)\in\R^d}\frac{\#(\Lambda'\cap Q_h^d((x_1, x_2)^t))}{h^{\gamma}}=0.$$
This shows $\dim^+(\Lambda')=0$.
\end{proof}
\medskip
 
\section{Fourier decay}
In this section, we will prove Proposition \ref{prop1.4} and some results about Fourier decay of self-affine measures.

\begin{proof} ({\bf Proof of Proposition \ref{prop1.4}})
As $\mu$ is a singular measure, by \cite[Proposition 2.1]{HLL2013}, we have that the lower ($d$-)Beurling density of $\Lambda$ is zero. i.e. 
$$
D^{-}(\Lambda) = \lim_{h\to\infty} \inf_{x\in{\mathbb R}^d} \frac{\#(\Lambda\cap Q^d_h(x))}{h^d} = 0.
$$
Note that this implies that for all $h>0$, there exists $\xi_h$ such that 
\begin{equation}\label{eq5.1}
Q^d_h(\xi_h)\cap \Lambda = \emptyset.    
\end{equation}
Indeed, if (\ref{eq5.1}) is not true, then one can find $h_0>0$ such that $Q^d_{h_0}(x)\cap \Lambda\ne \emptyset$ for all $x\in\R^d$. But then we partition $\R^d$ into disjoint union of cubes with side length $h_0$ and each cube has at least one element in $\Lambda$. This implies that $D^{-}(\Lambda)\ge h_0^{-1}>0$, a contradiction.

\medskip

Now, using (\ref{eq5.1}), for any $k>0$, we can find $\xi_k$ such that 
\begin{equation}\label{eq1}
Q^d_{2^k}(\xi_k)\cap \Lambda = \emptyset.
\end{equation}
Suppose that $D^{+}_{\alpha}(\Lambda)<\infty$. Then there exists a constant $C'>0$ such that 
$$
\#(\Lambda\cap Q^d_h(x))\le C'h^{\alpha}, \ \forall x\in{\mathbb R}^d.
$$
We now take $f = e^{2\pi i \langle\xi_k,x\rangle}$ into (\ref{eqframelower}), 
$$
\begin{aligned}
A \le& \sum_{\lambda\in \Lambda} |\widehat{\mu}(\xi_k-\lambda)|^2\\
 =& \sum_{\lambda\in \Lambda \cap(Q^d_{2^k}(\xi_k))^{\mathtt C}}|\widehat{\mu}(\xi_k-\lambda)|^2  \ \ \  (\mbox{by (\ref{eq1})})\\
 =& \sum_{j=k}^{\infty}\sum_{\lambda\in Q^d_{2^{j+1}}(\xi_k)\setminus Q^d_{2^j}(\xi_k)}|\widehat{\mu}(\xi_k-\lambda)|^2\\
 \le& C\sum_{j=k}^{\infty}\#(\Lambda\cap (Q^d_{2^{j+1}}(\xi_k)\setminus Q^d_{2^j}(\xi_k))) 2^{-j\gamma} \ \ \ (\mbox{by (\ref{decay condition})})\\
 \le &C' \cdot C \sum_{j=k}^{\infty} (2^{j+1})^{\alpha} 2^{-j\gamma}\\
 = & C'\cdot C \sum_{j=k}^{\infty} 2^{j(\alpha-\gamma)}.
\end{aligned}
$$
Suppose that $\alpha<\gamma$. Then the right hand side above will tend to zero as $k$ tends to infinity which means that $\Lambda$ cannot satisfy \eqref{eqframelower}. Hence, $\alpha \ge \gamma$. In particular, this implies that $\gamma\le \mbox{dim}^+(\Lambda)$ by taking infimum of $\alpha$ such that $D^{+}_{\alpha}(\Lambda)<\infty$.
\end{proof}

\medskip

Because of Proposition \ref{prop1.4} and Theorem \ref{main-thm}, we can show that all self-affine spectral measures we considered have Fourier dimension zero. However, much stronger can be proved easily as follows. 

\begin{proposition}
Let $R\in M_d(\Z)$ be an expansive matrix and $B\in\Z^d$ be a finite set. Suppose that $\#B<|\det (R)|$. Then the Fourier transform  self-affine measure $\mu(R,B)$ does not decay to zero. 
\end{proposition}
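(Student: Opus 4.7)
The plan is to exploit the functional equation satisfied by $\widehat{\mu}$ along $R^t$-orbits of integer points. Writing $m_B(\xi) = \frac{1}{\#B}\sum_{b\in B} e^{-2\pi i\langle b,\xi\rangle}$ for the mask, the infinite-product formula $\widehat{\mu}(\xi) = \prod_{n\ge 1} m_B((R^t)^{-n}\xi)$ gives the recursion $\widehat{\mu}(R^t\eta) = m_B(\eta)\widehat{\mu}(\eta)$. Since $B\subset\Z^d$, the mask is $\Z^d$-periodic with $m_B|_{\Z^d}\equiv 1$. Iterating, for any $k\in\Z^d$ every intermediate argument $(R^t)^j k$ lies in $\Z^d$, so
\[
\widehat{\mu}((R^t)^N k) \;=\; \prod_{j=0}^{N-1} m_B((R^t)^j k)\cdot\widehat{\mu}(k) \;=\; \widehat{\mu}(k),\qquad \forall\,k\in\Z^d,\ N\ge 0.
\]
Because $R^t$ is expansive, $1$ is not an eigenvalue, so for any $k\in\Z^d\setminus\{0\}$ the iterates $(R^t)^N k$ are distinct and $|(R^t)^N k|\to\infty$. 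Hence the proof reduces to producing a single $k_0\in\Z^d\setminus\{0\}$ with $\widehat{\mu}(k_0)\ne 0$.

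For this reduction I would use the hypothesis $\#B<|\det R|$ in its standard form: from $R\,T(R,B) = T(R,B)+B$ one has $|\det R|\cdot |T(R,B)| \le \#B\cdot |T(R,B)|$, which (combined with compactness) forces $|T(R,B)|=0$ in Lebesgue measure on $\R^d$. Now form the periodization $\nu$ of $\mu$ on $\T^d$ via $\nu(E) = \sum_{n\in\Z^d}\mu(E+n)$ for Borel $E\subset[0,1)^d$; as in Proposition \ref{key-lemma}, $\nu$ is a finite Borel measure on $\T^d$ with Fourier coefficients $\widehat{\nu}(k) = \widehat{\mu}(k)$ for $k\in\Z^d$. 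Since $T(R,B)$ is a bounded Lebesgue-null set in $\R^d$, its image in $\T^d$ is also Lebesgue-null, so $\nu$ is singular with respect to the Haar measure $m_{\T^d}$.

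Suppose for contradiction that $\widehat{\mu}(k)=0$ for every $k\in\Z^d\setminus\{0\}$; then $\nu$ and $m_{\T^d}$ share all Fourier coefficients, and the uniqueness theorem for Fourier series on $\T^d$ forces $\nu = m_{\T^d}$, contradicting singularity. So some $k_0\in\Z^d\setminus\{0\}$ satisfies $\widehat{\mu}(k_0)\ne 0$, and the first paragraph then gives $|\widehat{\mu}((R^t)^N k_0)| = |\widehat{\mu}(k_0)| > 0$ along a sequence going to infinity. The only conceptual step that requires care is the singularity/periodization argument producing a non-zero integer frequency with $\widehat{\mu}\ne 0$; the functional equation along integer orbits and the Lebesgue-nullity of the attractor are routine consequences of the hypotheses.
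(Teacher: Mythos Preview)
Your proof is correct and follows essentially the same approach as the paper: both establish the constancy $\widehat{\mu}((R^t)^N k)=\widehat{\mu}(k)$ for $k\in\Z^d$ from the $\Z^d$-periodicity of the mask, and both reduce to finding a nonzero integer frequency where $\widehat{\mu}$ does not vanish via the singularity of the periodized measure on $\T^d$. Your write-up is somewhat more self-contained---you spell out why $|T(R,B)|=0$ and run the periodization/uniqueness argument at $\xi=0$ directly---whereas the paper simply asserts singularity and invokes (the proof of) Proposition~\ref{key-lemma}, but the underlying ideas are identical.
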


\begin{proof}
We think this result is probably well-known. We just present here for completeness. Note that if $\#B<|\det(R)|$. $\mu = \mu(R,B)$ must be singular. By Proposition \ref{key-lemma}, we can find $k\ne0$ and $k\in{\mathbb Z}^d$ such that $\widehat{\mu}(k)\ne 0$.  For all integers $n>0$, noting that $B$ are all integer vectors,
$$
\widehat{\mu}((R^t)^nk) =\prod_{j=1}^{\infty} \widehat{\delta_B}((R^{t})^{-j}((R^t)^nk)) =\prod_{j=n+1}^{\infty}  \widehat{\delta_B}((R^t)^{n-j}k) = \widehat{\mu}(k) \ne0.
$$
This shows that all such self-affine measures do not decay. 
\end{proof}


\begin{thebibliography}{9999}
 








\bibitem{Chr}
O. Christensen,  An Introduction to Frames and Riesz Bases, Appl. Numer. Harmon. Anal.. Birkh\"{a}user Boston Inc., Boston, MA, 2003.


\bibitem{CKS2008}
W. Czaja, G. Kutyniok, and D. Speegle,  Beurling dimension of Gabor pseudoframes for affine
subspaces, J. Fourier Anal. Appl., 14(2008), 514 - 537.



\bibitem{DHL2013}
X. -R. Dai, X. -G. He, C. -K. Lai, Spectral property of Cantor measures with consecutive digits, Adv.
Math. 242 (2013) 1681--1693.



\bibitem  {DEL}
{\sc D. Dutkay, S. Emami and C.-K. Lai}, {\it Existence and exactness of exponential Riesz sequences and frames for fractal measures}, to appear in J. Anal. Math., https://arxiv.org/abs/1809.06541.



\bibitem{DHSW2011}
D. Dutkay, D. Han, Q. Sun, E. Weber, On the Beurling dimension of exponential frames, Adv. Math. 226 (2011)
285–297.

\bibitem{DHW2011}
D. Dutkay, D. Han, E. Weber,  Bessel sequences of exponentials on fractal measures, J. Funct. Anal. 261 (2011), no. 9, 2529–2539.

\bibitem{DJ2007}
D.~E. Dutkay and P. Jorgensen. Fourier frequencies in affine iterated function systems. J. Funct. Anal., 247(2007), 110-137. 





 \bibitem{DHL2019}
D.~E. Dutkay, J.~Haussermann,  C.~K. Lai, Hadamard triples generate self-affine spectral measures. Trans. Amer. Math. Soc.  371  (2019),  no. 2, 1439 - 1481.







\bibitem{Fug1974}
B.~Fuglede, Commuting self-adjoint partial differential operators and a group
  theoretic problem,  J. Funct. Anal., (1974) 16(1):101--121.



\bibitem {HLL2013}
 X. -G. He, C.-K. Lai and K.-S. Lau, Exponential spectra in $L^2 (\mu)$. Appl. Comput. Harmon. Anal.  34  (2013),  no. 3, 327-338.


\bibitem {HL2008}
T.-Y Hu and K.-S. Lau, Spectral property of the Bernoulli convolutions, Adv. Math. 219 (2008), 554 - 567.


\bibitem{Hut81}
John E. Hutchinson. Fractals and self-similarity. Indiana Univ. Math. J., 30 (1981), 713 - 747.

\bibitem{HKTW}
X. -G He, Q.-C Kang, M. -W Tang and Z.-Y Wu, Beurling dimension and self-similar measures,
J. Funct. Anal. 274 (2018),  2245-2264.


 \bibitem  {ILLW}

 A. Iosevich, C.-K. Lai, B. Liu and E. Wyman,  Fourier frames for surface-carried measures,  to appear in IMRN, 2020.



\bibitem{JP1998}
P. Jorgensen, S. Pedersen, Dense analytic subspaces in fractal $L^2$ spaces, J. Anal. Math. 75 (1998) 185-228.

\bibitem{K2004}
Y.~ Katznelson, An introduction to harmonic analysis, Cambridge University Press, 2004.



\bibitem{LW2002}
I. {\L}aba, Y. Wang, On spectral Cantor measures, J. Funct. Anal. 193 (2002) 409--420.

\bibitem{LW2017}
C.-K. Lai, Y. Wang, Non-spectral fractal measures with Fourier frames, J. Fractal Geom. 4 (2017), no. 3, 305--327.

\bibitem{Lan1967}
H. Landau, Necessary density conditions for sampling and interpolation of certain entire functions, Acta Math. 117
(1967) 37–52.

\bibitem {Lev}
N. Lev,  Fourier frames for singular measures and pure type phenomen, Proc. Amer. Math. Soc., 146 (2018), 2883-2896.

\bibitem{OU} 
A. Olevskii and A. Ulanovskii,
 Functions with disconnected spectrum. 
Sampling, interpolation, translates, University Lecture Series, 65. American Mathematical Society, Providence, RI, 2016.


\bibitem{shi} 
R. Shi, On dimensions of frame-spectral measures and their frame spectra, https://arxiv.org/pdf/2002.03855.pdf, 2020.



\bibitem{S1998}
R.~S. Strichartz, Remarks on: ”Dense analytic subspaces in fractal $L^
2$-spaces”,  J. Anal. Math., 75(1998), 229–
231.

\bibitem{S2000}
R.~S. Strichartz, Mock Fourier series and transforms associated with certain Cantor measures. J. Anal. Math. 81 (2000) 209-238.




\end{thebibliography}
\end{document}